\UseRawInputEncoding
\documentclass[leqno]{amsart}

\usepackage{stmaryrd,graphicx}
\usepackage{amssymb,mathrsfs,amsmath,amscd,amsthm,color}
\usepackage{float}
\usepackage{mathabx}
\usepackage[all,cmtip]{xy}
\DeclareMathAlphabet{\mathpzc}{OT1}{pzc}{m}{it}
\usepackage{amsfonts,latexsym,wasysym}

\DeclareMathOperator{\im}{Im}
\newcommand{\rkn}{\mathbf{wrk}_n}
\newcommand{\frkn}{\mathbf{fwrk}_n}
\newcommand{\rk}{\mathbf{wrk}}
\newcommand{\frk}{\mathbf{fwrk}}

\newcommand{\horkn}{\mathbf{hwrk}_n}

\newcommand{\san}{\mathscr{S}}
\newcommand{\wildn}{\mathbf{w}_{n}}
\newcommand{\wild}{\mathbf{w}}
\newcommand{\sw}{\bigcurlyvee}

\newcommand{\ds}{\displaystyle}
\newcommand{\ui}{[0,1]}
\renewcommand{\int}{\text{int}}

\newcommand{\fe}{\mathbb{FE}}
\newcommand{\fwildn}{\mathbf{fw}_{n}}
\newcommand{\fwild}{\mathbf{fw}}

\newcommand{\scrb}{\mathscr{B}}

\newcommand{\scrs}{\mathscr{S}}
\newcommand{\scrv}{\mathscr{V}}

\newcommand{\bbe}{\mathbb{E}}
\newcommand{\bbn}{\mathbb{N}}

\newcommand{\bbr}{\mathbb{R}}

\newcommand{\bbz}{\mathbb{Z}}

\newcommand{\ov}{\overline}

\newtheorem{theorem}{Theorem}[section]
\newtheorem{lemma}[theorem]{Lemma}
\newtheorem{proposition}[theorem]{Proposition}
\newtheorem{corollary}[theorem]{Corollary}
\theoremstyle{definition}\newtheorem{definition}[theorem]{Definition}
\newtheorem{example}[theorem]{Example}

\newtheorem{remark}[theorem]{Remark}
\newtheorem{problem}[theorem]{Problem}

\begin{document}
\title[Transfinitely iterated wild sets]{Transfinitely iterated wild sets}

\author[J. Brazas]{Jeremy Brazas}
\address{West Chester University\\ Department of Mathematics\\
West Chester, PA 19383, USA}
\email{jbrazas@wcupa.edu}

\author[A. Mitra]{Atish Mitra}
\address{Montana Technical University\\ Department of Mathematical Sciences\\
1300 West Park Street Butte, MT 59701, USA}
\email{amitra@mtech.edu}

\subjclass[2010]{Primary 54F15; 55Q52; 55Q35;03E15  }
\keywords{$\pi_n$-wild set, $\pi_n$-wild rank, $n$-dimensional infinite earring space, Peano continuum, iterated wild set}
\date{\today}

\begin{abstract}
In this paper, we study homotopical analogues of the Cantor-Bendixson derivative. For each $n\geq 0$, the \textit{$\pi_n$-wild set} $\wildn(X)$ of a topological space $X$ is the subspace of $X$ consisting of the points at which there exists a shrinking sequence of essential based maps $S^n\to X$. Since the operator $\wildn$ permits iteration, every given space $X$ yields a descending transfinite sequence of nested subspaces $\{\wildn^{\kappa}(X)\}_{\kappa}$ that stabilizes at some smallest ordinal $\rkn(X)$ called the \textit{$\pi_n$-wild rank} of $X$. We show that the entire transfinite sequence $\{ho(\wildn^{\kappa}(X))\}_{\kappa}$ of homotopy types is a homotopy invariant of $X$ and that $\rkn(X)$ can be an arbitrary countable ordinal when $X$ is an $n$-dimensional Peano continuum. It remains open if there exists a continuum $X$ with uncountable $\pi_n$-wild rank. This difficulty motivates the parallel study a basepoint-free version $\frkn(X)$, called the \textit{free $\pi_n$-wild rank} of $X$. We show that for every continuum $X$, $\frkn(X)$ is always countable and can be any countable ordinal.
\end{abstract}

\maketitle

\section{Introduction}

When considering the homotopical structure of locally complicated spaces such as fractals and Peano continua, the existence of a ``wild point" limits the applicability of traditional methods. Ironically, it is precisely these points and their relationship to infinite products in homotopy groups that have been used to distinguish and classify homotopy types of Peano continua \cite{connerkent,ConnerMeilstrup,EdaSpatial}. For $n\geq 0$, the \textit{$\pi_n$-wild set} $\wildn(X)$ of a topological space $X$ is the subspace of $X$ consisting of points $x\in X$ at where there exists a sequence $f_k:(S^n,\ast)\to (X,x)$ of essential based maps that converges to the constant map at $x$ (in the compact-open topology). When $X$ is first countable and $n=1$, $\wildn(X)$ is the set of points at which $X$ fails to be semilocally simply connected. In particular, the $\pi_1$-wild set plays a key role in the homotopy theory of one-dimensional \cite{Edaonedim,Meilstrup} and planar spaces \cite{Kentplanar} due to its invariance under homotopy equivalences \cite[Section 9]{BFpants}.

Higher dimensional wild sets $\wildn(X)$, $n\geq 2$ have recently been shown to enjoy similar invariance properties \cite{BrazasMitraHigher} and are expected to play an important role as progress is made in higher dimensions. The main problem of interest here beyond computing higher homotopy groups is whether or not weakly homotopy equivalent finite dimensional Peano continua must be homotopy equivalent \cite[Problem 1.1]{BrazasMitraHigher} (see \cite[Problem 4.5]{edakarrep} and \cite[Problem 5.1]{krnoncontractible} for a weaker question).

Since $\wildn(X)$ is a space in its own right, $\wild_n(\wild_n(X))$ may be non-empty, i.e. a wild set may have its own internal wildness. This possibility allows one to recursively define the iterated wild sets $\wild_{n}^{k}(X)=\wildn(\wildn^{k-1}(X))$ to obtain a descending sequence of subspaces $X\supseteq \wildn(X)\supseteq \wildn^{2}(X)\supseteq \wildn^3(X)\supseteq $. Motivated by an early version of the current paper, the initial sequence $X,\wild_1(X),\wild_{1}^{2}(X),\wild_{1}^{3}(X),\dots$ in dimension $n=1$ was applied successfully in \cite{BrazasPavesic} to compute the topological complexity of many one-dimensional Peano continua. Furthermore, it is possible that the intersection $\bigcap_{k\in\omega}\wildn^{k}(X)$ of the sequence of iterated wild sets is non-empty and itself has a non-trivial $\pi_{n}$-wild set. By defining $\wildn^{\omega}(X)=\bigcap_{k\in\omega}\wildn^{k}(X)$ and continuing the definition with transfinite recursion, we define $\wildn^{\kappa}(X)$ for any ordinal $\kappa$. Basic set-theoretic considerations ensure that the transfinite sequence $\{\wildn^{\kappa}(X)\}_{\kappa}$ stabilizes at some smallest ordinal $\rkn(X)=\kappa$, which we call the \textit{$\pi_n$-wild rank of $X$}.

It is instructive to compare $\rkn(X)$ with the classical Cantor-Bendixson rank of a space $X$. Recall that the \textit{Cantor-Bendixson derivative} of a space $X$ is the subspace $L(X)$ of $X$ consisting of all limit points (i.e. non-isolated points) of $X$. Define $L^{0}(X)=X$, $L^{\kappa+1}(X)=L(L^{\kappa}(X))$ for all ordinals $\kappa$ and $L^{\kappa}(X)=\bigcap_{\lambda<\kappa}L^{\lambda}(X)$ when $\kappa$ is a limit ordinal. The transfinite sequence $\{L^{\kappa}(X)\}_{\kappa}$ stabilizes at either an empty space or a perfect set and the smallest ordinal $\kappa$ for which $L^{\kappa+1}(X)=L^{\kappa}(X)$ is the \textit{Cantor-Bendixson rank} of $X$. In comparison, the points of $\wildn(X)$ are ``limit points" of $X$ in a homotopical sense and the transfinite sequence $\{\wildn^{\kappa}(X)\}_{\kappa}$ will stabilize either at an empty space or a perfectly $\pi_n$-wild space, i.e. one which is wild at all of its points (Definition \ref{defperfectlywild}).

In this paper, we show that a homotopy equivalence $X\simeq Y$, restricts to a homotopy equivalence $\wildn^{\kappa}(X)\simeq \wildn^{\kappa}(X)$ for all ordinals $\kappa$ (Theorem \ref{homotopyinvariance2}). Hence, if $ho(X)$ denotes the homotopy type of a space $X$, then the entire transfinite sequence $\{ho(\wildn^{\kappa}(X))\}_{\kappa}$ of homotopy types is itself a homotopy invariant of $X$ (Corollary \ref{invariancecorollary}). Our main result is the following theorem, which shows that for a Peano continuum $X$, the transfinite sequence $\{\wildn^{\kappa}(X)\}_{\kappa}$ may continue non-trivially into arbitrarily large countable ordinals. 

\begin{theorem}\label{thm1}
For any countable ordinal $\lambda\geq 1$, there exists a based $n$-dimensional Peano continuum $(X_{\lambda},x_{\lambda})$ such that $\rkn(X_{\lambda})=\lambda$ and $\wild_{n}^{\lambda}(X_{\lambda})=\emptyset$. Moreover, when $\lambda\geq 2$ is a successor ordinal, we may arrange for $\wild_{n}^{\lambda-1}(X_{\lambda})=\{x_{\lambda}\}$.
\end{theorem}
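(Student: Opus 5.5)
We build the spaces $(X_\lambda,x_\lambda)$ by transfinite induction on $\lambda$, using shrinking wedges. The model is the $n$-dimensional infinite earring $\mathbb{E}_n=\sw_{k} S^n_k$, with the spheres shrinking to a wedge point. For $\lambda=1$ we take $X_1=S^n$: it has no wild points, so $\wildn(X_1)=\emptyset$ and $\rkn(X_1)=1$. For $\lambda=2$ we take $X_2=\mathbb{E}_n$ based at the wedge point. Then $\wildn(X_2)=\{x_2\}$, because each sphere retracts onto itself and is therefore essential, while every other point has a neighbourhood contained in a single punctured sphere. Since a point has no wild points, $\wildn^2(X_2)=\emptyset$.

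For the inductive steps we use the shrinking wedge $\sw_{k}(Y_k,y_k)$ of based Peano continua. Its points are those of the disjoint union with all basepoints identified, topologized so that every neighbourhood of the wedge point contains all but finitely many $Y_k$. We need two facts about this construction.
\begin{enumerate}
\item \textbf{Locality.} For a point $y\neq$ the wedge point lying in $Y_k$, and for every $\kappa$, we have $y\in \wildn^\kappa(\sw Y_j)$ if and only if $y\in\wildn^\kappa(Y_k)$.
\item \textbf{Retraction.} Each $Y_k$ is a retract of the wedge, so essential maps into $Y_k$ remain essential in the wedge.
\end{enumerate}
Locality should follow from the fact that wildness is local, so it can be computed in an open neighbourhood. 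For the iterated sets we argue by induction on $\kappa$, using that retracts preserve essentiality. This is the step where the wedge point has to be handled carefully at every stage.

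\emph{Successor case.} Given $X_\lambda$ with $\wildn^\lambda(X_\lambda)=\emptyset$ and $\lambda$ a successor, put $X_{\lambda+1}=\sw_k X_\lambda$, where each copy is a scaled-down $X_\lambda$ (embedded in $\mathbb{R}^N$ with diameter tending to $0$), based at $x_\lambda$. Arrange inductively that $x_\lambda\in \wildn^{\lambda-1}(X_\lambda)$ and that $x_\lambda$ is not isolated there but carries essential spheres. More precisely, we arrange that $\wildn^{\lambda-1}(X_\lambda)=\{x_\lambda\}$ is a point at which an essential $n$-sphere exists inside the $(\lambda-2)$ stage. By locality, $\wildn^{\lambda}(X_{\lambda+1})\subseteq\{x_{\lambda+1}\}$ and $\wildn^{\lambda-1}(X_{\lambda+1})$ is the wedge point together with the wildness coming from each copy.

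The key point is that the wedge point must \emph{remain} wild at each stage: the $(\lambda-1)$-stage of $X_{\lambda+1}$ should be a null sequence of spheres converging to $x_{\lambda+1}$, and these spheres must stay essential inside $\wildn^{\lambda-1}(X_{\lambda+1})$. To guarantee this, we strengthen the induction hypothesis so that $\wildn^{\lambda-2}(X_\lambda)$ contains a copy of $\mathbb{E}_n$ at $x_\lambda$ which is a retract. It may be cleaner to attach at the base an earring $\mathbb{E}_n$ whose $k$-th sphere $S_k$ has a scaled $X_\lambda$ attached along a wild point at every rational point. That makes $\wildn^{\kappa}$ preserve the spheres.

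Concretely, we choose an iterative "sphere-decorating" construction. Let $D(Y)$ be $S^n$ with shrinking copies of $Y$ attached at a countable dense set of points. Then $\wildn^{\mathrm{rk}(Y)-1}(D(Y))\supseteq S^n$, with the sphere surviving since it is a retract and hence essential. Taking $X_{\lambda+1}=\sw_k D_k$, where the $D_k$ are suitable decorations, gives $\wildn^{\lambda}(X_{\lambda+1})=\{x_{\lambda+1}\}$ and $\wildn^{\lambda+1}(X_{\lambda+1})=\emptyset$. All the spaces stay $n$-dimensional Peano continua, since countable null unions of $n$-dimensional compacta are $n$-dimensional.

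\emph{Limit case.} For limit $\lambda$, choose successor ordinals $\lambda_k\nearrow\lambda$ and set $X_\lambda=\sw_k X_{\lambda_k}$. By locality, for every $\kappa<\lambda$ the set $\wildn^\kappa(X_\lambda)$ contains points of the copies $X_{\lambda_k}$ with $\lambda_k>\kappa$, hence is nonempty. Therefore $\wildn^\lambda(X_\lambda)\subseteq\{x_\lambda\}$, and this intersection is nonempty, so it equals $\{x_\lambda\}$. To get $\wildn^\lambda=\emptyset$ as the theorem requires, we instead take a disjoint-free version: join the $X_{\lambda_k}$ by arcs along a null sequence converging to a point that never becomes wild. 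For instance, attach the copies to the arc $[0,1]$ at the points $1/k$ via small arcs. Then $0$ is the only accumulation point, and it is no longer wild at stage $\lambda$, since it is a single point and so has no essential maps.

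The main obstacle is the persistence of the wild structure: showing that iterated wild sets of wedges and decorations compute locally, and that the designated spheres remain essential in the sub-space $\wildn^\kappa$. Retractions will be the main tool for this.
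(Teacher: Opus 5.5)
Your final constructions match the paper's. For $\lambda+1$ with $\lambda$ a successor, $\sw_k D(X_\lambda)$ is, up to where the dense set sits, the decorated earring $\san(\bbe_n,A,X_\lambda)$ of the paper's Case I. Your comb for limit $\lambda$ is the whiskered wedge of Case III: the accumulation point is a singleton path component of $\wildn$, so it already disappears at stage $2$, after which Lemma \ref{disjointunionranklemma} finishes.

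However, the induction as you organized it has a hole at successors of limit ordinals. Your successor step needs $\lambda$ to be a successor, because it attaches copies of $X_\lambda$ at the unique point of $\wildn^{\lambda-1}(X_\lambda)$. Your limit step outputs a space with $\wildn^{\lambda}=\emptyset$, which has no top-stage point to attach along. So nothing produces $X_{\omega+1}$, and everything above it is lost. The missing case is exactly the plain wedge $\sw_k X_{\lambda_k}$ that you computed and then discarded. Since $\wildn^{\lambda}(\sw_k X_{\lambda_k})=\{x\}$, it is the required $X_{\lambda+1}$, with the secondary statement needed to continue the induction. This is the paper's Case II, via Lemma \ref{shrinkingwedgeinductionlemma}.

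Three justifications also need repair.
\begin{itemize}
\item Nonemptiness of $\bigcap_k\wildn^{\lambda_k-1}(\sw_j X_{\lambda_j})$ is not automatic, since iterated wild sets need not be closed. It comes from the secondary hypothesis $\wildn^{\lambda_k-1}(X_{\lambda_k})=\{x_{\lambda_k}\}$ together with Lemma \ref{swlemma}(1), which places the wedge point in every $\wildn^{\lambda_k-1}$ of the wedge.
\item $S^n\subseteq\wildn^{\lambda-1}(D(X_\lambda))$ does not follow from $S^n$ being a retract. It needs an inner transfinite induction on $\mu$, as in Lemma \ref{xinsanlemma}. At the attachment point of the $j$-th copy $(B_j,b_j)$, $b_j\in\wildn^{\mu+1}(B_j)$ supplies fully essential maps into $\wildn^{\mu}(B_j)$. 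These stay essential because $\wildn^{\mu}(B_j)$ is a retract of $\wildn^{\mu}(D)$. At the remaining points of $S^n$, you conjugate these maps by short arcs of $S^n$, which lie in $\wildn^{\mu}(D)$ by the inner hypothesis. The matching upper bound on the copies is Corollary \ref{sanwildsetcor}.
\item The locality you flag as the main obstacle is Lemma \ref{swlemma}(2). It is proved by splitting on whether the wedge point lies in $\wildn^{\kappa}$ of the wedge.
\end{itemize}
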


Some technical issues are created by the following combination of facts: If $X$ is not locally path-connected, then $\wildn(X)$ need not be closed in $X$ \cite[Example 2.12]{BrazasMitraHigher}. Additionally, if $X$ is a Peano continuum, then $\wildn(X)$ can be an arbitrary continuum (compact metrizable space) \cite[Theorem 1.2]{BrazasMitraHigher}. When $X$ is a non-locally path-connected continuum, $\wildn^{\kappa}(X)$ need not be closed in $X$ and it becomes unclear if $\{\wildn^{\kappa}(X)\}_{\kappa}$ stabilizes at a countable ordinal.

Hence, $\wildn^{\kappa}(X)$ need not be closed in $X$ for $\kappa\geq 2$. These difficulties highlight the non-triviality of the following open problems.

\begin{problem}\label{openprob1}
Let $n\geq 1$. Characterize the class of spaces which are homeomorphic to $\wildn(X)$ for a continuum $X$.
\end{problem}

\begin{problem}\label{openprob2}
Let $n\geq 1$. Is there a continuum $X$ (equivalently, a Peano continuum) such that $\rkn(X)$ is uncountable?
\end{problem}

It seems likely that the answer to both of these problems is independent of $n$. The equivalence in Problem \ref{openprob2} is due to \cite[Theorem 1.2]{BrazasMitraHigher}, which is stated as Theorem \ref{realizingcompactmetricspacesthm} in the current paper.

In \cite{FRVZ11}, an important difference between the unbased and based versions of the \textit{semilocally simply connected} property is considered. This same difference is to blame for the difficulty in answering Problems \ref{openprob1} and \ref{openprob2}. While $\wildn(X)$ is the notion of ``wild set" that is most relevant to the algebra of homotopy groups we are motivated to also consider a natural basepoint-free analogue of $\wildn(X)$. For $n\geq 0$, \textit{free $\pi_n$-wild set} $\fwildn(X)$ of a topological space $X$ is the subspace of $X$ consisting of points $x\in X$ at where there exists a sequence $f_k:S^n\to X$ of essential \textit{unbased} maps that converges to the constant map at $x$. We obtain an analogous transfinite sequence $\{\fwildn^{\kappa}(X)\}_{\lambda}$ and rank $\frkn(X)$ that even more closely behaves like the classical Cantor-Bendixson construction. In particular, for a continuum $X$, $\{\fwildn^{\kappa}(X)\}_{\lambda}$ is a descending sequence of \textit{closed} subspaces of $X$ and $\frkn(X)$ must be countable for elementary reasons. Our main result regarding free $\pi_n$-wild sets, which is analogous to Theorem \ref{thm1}, shows that every countable ordinal is a free $\pi_n$-wild rank.

\begin{theorem}\label{thm2}
If $X$ is a continuum, then $\frkn(X)$ is a countable ordinal. Conversely, for any countable ordinal $\lambda\geq 0$, there exists a continuum $X_{\lambda}$ such that $\frkn(X_{\lambda})=\lambda$ and such that $\fwild_{n}^{\lambda-1}(X_{\lambda})=S^n$ whenever $\lambda$ is a successor ordinal.
\end{theorem}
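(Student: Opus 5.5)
The plan has two halves: a closedness argument giving countability, and an explicit transfinite construction of continua of every countable rank.

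\emph{Countability.} First I will show that if $X$ is compact metric, then $\fwildn(X)$ is closed in $X$. Let $x_j\in\fwildn(X)$ with $x_j\to x$. For each $j$ pick an essential unbased map $g_j:S^n\to X$ with image in the ball $B(x_j,1/j)$; this is possible because some map in the shrinking sequence at $x_j$ does so. Then $g_j$ converges uniformly to the constant map at $x$, so $x\in\fwildn(X)$. No basepoint bookkeeping is needed, which is exactly why the free version avoids the non-closedness seen in \cite[Example 2.12]{BrazasMitraHigher}. Since a closed subspace of a compact metric space is again compact metric, transfinite induction shows that every $\fwildn^{\kappa}(X)$ is closed in $X$; at limit stages this holds because intersections of closed sets are closed. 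So $\{\fwildn^{\kappa}(X)\}_\kappa$ is a non-increasing transfinite sequence of closed subsets of a second countable space. Fix a countable base $\mathcal B$ and send each strict drop $\fwildn^{\kappa}(X)\supsetneq\fwildn^{\kappa+1}(X)$ to some $B\in\mathcal B$ that meets the former but not the latter. This assignment is injective, since after stage $\kappa+1$ the set $B$ never meets the sequence again. Hence there are only countably many strict drops, and $\frkn(X)$ is countable.

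\emph{Realization.} I will build $X_\lambda$ by transfinite recursion on $\lambda$, modelled on Cantor--Bendixson constructions, with spheres of shrinking diameter playing the role of isolated points. Set $X_0=\{pt\}$ and $X_1=S^n$. The sphere is not freely wild anywhere, since small maps into $S^n$ land in a contractible set. So $\fwildn(S^n)=\emptyset$, giving rank $1$ with $\fwildn^0=S^n$.

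For a successor $\lambda=\mu+1$, I want $\fwildn^{\mu}(X_\lambda)=S^n$. The idea is to attach to each point of a base sphere $S^n$, in a dense and null way, shrinking copies of $X_\mu$-like pieces. Concretely, take a countable dense set $\{p_k\}\subset S^n$ and attach to $p_k$ a copy of $X_\mu$ scaled to diameter $\le 2^{-k}$, in such a way that the copies accumulating at every point of $S^n$ all have rank exactly $\mu$. Equivalently, use a sequence of copies of $X_\mu$ at each $p_k$ converging to $p_k$. The sphere alone has $\fwildn=\emptyset$, but every point of $S^n$ is a limit of essential spheres living in the attached copies.

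For limit $\lambda=\sup\mu_i$, attach copies of $X_{\mu_i}$ along a null sequence converging to a single point, or to an arc. Then every point has derived rank below $\lambda$, and $\fwildn^{\lambda}=\emptyset$.

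\emph{Main obstacle.} The key technical step is a locality lemma: for $\kappa\le\mu$, $\fwildn^{\kappa}$ of the union equals the union of the $\fwildn^{\kappa}$ of the pieces, together with the limit points that pick up exactly one additional derivative level. This requires that essential small maps into a copy of $X_\mu$ remain essential in the whole space. I would secure this with retractions: each attached copy retracts onto itself, with the rest collapsing to the attaching point. The reverse inclusion requires that small maps near a point of $S^n$ factor, up to homotopy, through finitely many pieces. To arrange this, I would attach the copies along arcs, so that the space is locally a one-point union.

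I expect the hardest part to be pinning down the derived sets exactly at the accumulation points, that is, checking that the sphere survives precisely to level $\mu$ and dies at $\mu+1$. I would first try to prove this by induction on $\kappa$, using that $\fwildn^{\kappa}$ is closed and tracking ranks of the pieces.
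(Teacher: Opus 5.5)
Your countability half is correct and is the paper's argument: closedness of $\fwildn$ in first countable spaces, plus the standard countable-base argument for descending chains of closed sets, which you prove directly.

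The realization half has a genuine gap, and your own first half exposes it. For limit $\lambda$ you aim for $\frkn(X_\lambda)=\lambda$ together with $\fwildn^{\lambda}(X_\lambda)=\emptyset$, and no continuum can do this. If $\frkn(X)=\lambda$ is a limit, every $\fwildn^{\kappa}(X)$ with $\kappa<\lambda$ is a non-empty closed subset of the compact space $X$. So $\fwildn^{\lambda}(X)=\bigcap_{\kappa<\lambda}\fwildn^{\kappa}(X)$ is non-empty and, being the terminal term, perfectly free $\pi_n$-wild. This is the analogue of compact scattered spaces having successor Cantor--Bendixson rank. In your construction the accumulation point $p$ survives every stage $\kappa<\lambda$. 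With pieces whose terminal sets are empty you get $\fwildn^{\lambda}(X_\lambda)=\{p\}$ and rank $\lambda+1$; this is essentially the paper's construction for $\lambda=\kappa+1$ with $\kappa$ a limit. The missing ingredient is a non-empty perfectly free $\pi_n$-wild continuum $P$, e.g. a perfectly $\pi_n$-wild Peano continuum from \cite{BrazasMitraHigher}. The paper attaches shrinking copies of the lower-rank pieces densely to $P$, so that $\fwildn^{\lambda}(X_\lambda)=P=\fwildn(P)$. The same $P$ is needed for $\lambda=0$: your $X_0=\{pt\}$ has $\fwildn(\{pt\})=\emptyset$, hence rank $1$.

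Your successor step also needs a stronger inductive hypothesis than ``copies of rank exactly $\mu$''. For the base sphere to survive to stage $\mu$, the copies' stage-$(\mu-1)$ sets must still contain essential $n$-spheres, and rank alone does not guarantee this. For $n\geq 1$, if $\fwildn^{\mu-1}(X_\mu)$ is a point, the base sphere dies at stage $\mu$ and the rank is $\mu$. This is exactly why the theorem carries the clause $\fwildn^{\lambda-1}(X_\lambda)=S^n$. When $\mu$ is a limit there is no stage $\mu-1$: copies of a rank-$\mu$ continuum carry non-empty perfectly wild kernels that never die, so attaching them densely to $S^n$ gives rank $\mu$, not $\mu+1$. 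There you must instead use copies of $X_{\mu_i}$ for successor ordinals $\mu_i$ with $\sup_i\mu_i=\mu$. One option is to wedge them at one point as in the paper, so that $\fwildn^{\mu}(X_\lambda)$ is a point; note that this does not literally maintain the clause the paper's Case I uses. The other is to attach them along a sequence in $S^n$ every tail of which is dense, which yields $\fwildn^{\mu}(X_\lambda)=S^n$ and keeps the induction consistent. Finally, the locality lemma you flag as the main obstacle disappears if, as in the paper, you attach each copy through an added isolated basepoint ($X_\mu^{+}$). Each copy is then clopen in $X_\lambda$, its iterated free wild sets are computed inside it, and only the accumulation set needs an argument. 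This is allowed because the theorem asks for continua, not Peano continua.
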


In both Theorem \ref{thm1} and Theorem \ref{thm2}, the respective condition for the successor ordinal case is present to aid in the proof, which is by transfinite induction.

\section{Notation and Preliminaries}

All topological spaces are assumed to be Hausdorff and a ``map" is a continuous function. A continuum is a compact metrizable space and a Peano continuum is a path-connected and locally path-connected continuum. We use Lebesgue covering dimension for topological dimension and refer to \cite{EngelkingDimThry} for standard dimension theory.

For $n\geq 0$, $S^n=\{(x_1,x_2,\dots,x_{n+1})\in\bbr^{n+1}\mid \sum_{i}x_{i}^{2}=1\}$ will be the unit $n$-sphere with basepoint $s_0=(1,0,\dots,0)$. A map $f:S^n\to X$ is said to be \textit{inessential} if it is null-homotopic and \textit{essential} otherwise. When $X$ is an unbased space, we let $X^{+}=X\sqcup\{x_0\}$ denote the space with an additional isolated point.


When $X$ and $Y$ are topological spaces and we write $ho(X)$ to denote the homotopy type of $X$ and $Y^X$ will denote the space of continuous functions $X\to Y$ with the compact-open topology. If $A\subseteq X$ and $B\subseteq Y$, then $(Y,B)^{(X,A)}$ denotes the subspace of $Y^X$ consisting of relative maps $(X,A)\to (Y,B)$. When $y\in Y$, $c_y:X\to Y$ will denote the constant map at $y$. For a based topological space $(X,x_0)$, we write $\Omega^{n}(X,x_0)$ to denote the $n$-loop space $(X,x_0)^{(S^n,s_0)}$ and $\pi_n(X,x_0)=\{[f]\mid f\in \Omega^n(X,x_0)\}$ to denote the $n$-th homotopy group. If $f\in \Omega^n(X,x_1)$ and $\alpha:\ui\to X$ is a path from $x_0$ to $x_1$, then $\alpha\ast f\in \Omega^n(X,x_0)$ will denote the standard path-conjugate of $f$ by $\alpha$. 

We say that a sequence $\{f_k\}_{k\in \bbn}$ of maps $f_k:X\to Y$ \textit{converges to} $y\in Y$ if $\{f_k\}_{k\in\bbn}\to c_y$ in $Y^X$, that is, if for every neighborhood $U$ of $y$, there exists $K\in\bbn$ such that $\im(f_k)\subseteq U$ for all $k\geq K$. 

\begin{definition}\label{wildsetdef}
Let $n\geq 0$. 
\begin{enumerate}
\item A point $x\in X$ is a \textit{$\pi_n$-wild point of $X$} if there exists a sequence $\{f_k\}_{k\in\bbn}$ of essential based maps $f_k:(S^n,s_0)\to (X,x)$ that converges to $x$. Let $\wildn(X)$ denote the subspace of $X$ consisting of all $\pi_n$-wild points of $X$.
\item A point $x\in X$ is a \textit{free $\pi_n$-wild point} if there exists a sequence $\{f_k\}_{k\in\bbn}$ of essential unbased maps $f_k:S^n\to X$ that converges to $x$. Let $\fwildn(X)$ denote the subspace of $X$ consisting of all free $\pi_n$-wild points of $X$.
\end{enumerate}
\end{definition}

\begin{definition}
The \textit{shrinking wedge} of countable set $\{(A_j,a_j)\}_{j\in J}$ of based spaces is the space $\sw_{j\in J}(A_j,a_j)$ whose underlying set is the usual one-point union $\bigvee_{j\in J}(A_j,a_j)$ with canonical basepoint $b_0$ and where $A_j$ is identified canonically as a subset. A set $U$ is open in $\sw_{j\in J}A_j$ if
\begin{itemize}
\item $U\cap A_j$ is open in $A_j$ for all $j\in J$,
\item and whenever $b_0\in U$, we have $A_j\subseteq U$ for all but finitely many $j\in J$.
\end{itemize}
When the basepoints and/or indexing set are clear from context, we may write the shrinking wedge as $\sw_{J}A_j$. 
\end{definition}

\begin{example}[Based infinite earrings]
The \textit{$n$-dimensional infinite earring space} is the shrinking wedge $\bbe_n=\sw_{j\in\bbn}S^n$ of $n$-spheres with canonical basepoint $b_0$ (See Figure \ref{fig1}). We identify $\bbe_0=\sw_{\bbn}(S^0,1)$ with the space $\{1,1/2,1/3,\dots,0\}$ consisting of a single convergent sequence and basepoint $b_0=0$. Let $\ell_j:S^n\to \bbe_n$ denote the inclusion of the $j$-th sphere. When $n\geq 2$, it is known that $\bbe_n$ is $(n-1)$-connected, locally $(n-1)$-connected and that the canonical map $\Psi_n:\pi_n(\bbe_n)\to \check{\pi}_{n}(X)\cong\prod_{j\in\bbn}\bbz$ to the $n$-th shape homotopy group is an isomorphism \cite{EK00higher}. Moreover, $\bbe_n$ has non-trivial singular homology in arbitrarily high dimensions \cite{BarrattMilnor}.
\end{example}

\begin{example}[Free infinite earrings]
The free $n$-dimensional infinite earring space is the shrinking wedge $\fe_n=\sw_{j\in\bbn}(S^n)^{+}$ (where the basepoint of $(S^n)^+$ is the added isolated basepoint). Note that $\fe_n$ (with any choice of metric) consists of a sequence of $n$-spheres of null-diameter that shrinking two a basepoint $a_0$ that lies in its own path component (See Figure \ref{fig1}). Let $\mu:S^n\to \fe_n$ denote the inclusion of the $j$-th sphere.
\end{example}

Note that $\bbe_0\cong \fe_0$.

\begin{figure}[H]
\centering \includegraphics[height=1.3in]{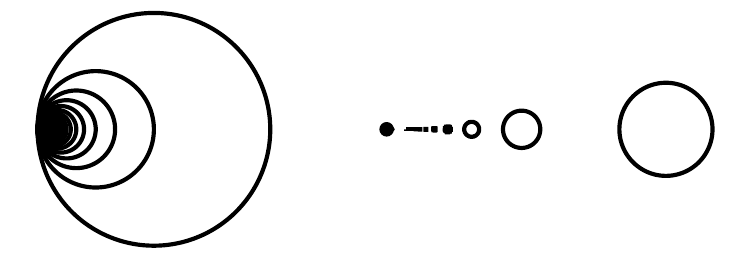}
\caption{\label{fig1} The based $1$-dimensional earring $\bbe_1$ (left) and the free infinite earring $\fe_1$ (right). In the higher-dimensional analogues, circles are replaced by higher-dimensional spheres.}
\end{figure}

\begin{definition}
For a map $f:\bbe_n\to X$, we will refer to $f_j=f\circ \ell_j$ as the \textit{$j$-th restriction} of $f$. We say that a map $f:\bbe_n\to X$ is \textit{fully essential} if the $j$-th restriction $f_j:S^n\to X$ is essential for all $j\in\bbn$.

Similarly, for a map $f:\fe_n\to X$, we will refer to $f_j=f\circ \mu_j$ as the \textit{$j$-th restriction} of $f$. We say that a map $f:\fe_n\to X$ is \textit{fully essential} if the $j$-th restriction $f_j:S^n\to X$ is essential for all $j\in\bbn$.
\end{definition}

The following is straightforward consequence of the shrinking wedge topology that we will apply without reference in the remainder of the paper.

\begin{lemma}\label{felemma}
Let $x\in X$ and $n\geq 0$. Then 
\begin{enumerate}
\item $x\in \wildn(X)$ if and only if there exists a fully essential based map $f:(\bbe_n,b_0)\to (X,x)$.
\item $x\in \fwildn(X)$ if and only if there exists a fully essential based map $f:(\fe_n,a_0)\to (X,x)$.
\end{enumerate}
\end{lemma}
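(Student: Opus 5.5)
We prove both directions of each item directly from the definition of the shrinking wedge topology. The argument for (2) is identical to (1) with $(\fe_n,a_0)$ in place of $(\bbe_n,b_0)$ and unbased maps in place of based ones, so we describe (1) in detail and then point out the small differences.

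For the ``if'' direction, suppose $f:(\bbe_n,b_0)\to(X,x)$ is fully essential and set $f_k=f\circ\ell_k$. Each $f_k$ is an essential based map $(S^n,s_0)\to(X,x)$, since $\ell_k$ sends $s_0$ to $b_0$. To see that $\{f_k\}$ converges to $x$, fix a neighborhood $U$ of $x$. Then $f^{-1}(U)$ is an open set in $\bbe_n$ containing $b_0$. By the definition of the shrinking wedge topology, it contains the $k$-th sphere for all but finitely many $k$. Hence $\im(f_k)\subseteq U$ for all large $k$, and $x\in\wildn(X)$.

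For the ``only if'' direction, let $f_k:(S^n,s_0)\to(X,x)$ be essential maps converging to $x$. Define $f:\bbe_n\to X$ by $f\circ\ell_k=f_k$; this is well defined because all the $f_k$ send the basepoint to $x$. We must check continuity. Let $V\subseteq X$ be open.
\begin{itemize}
\item Each $f^{-1}(V)\cap \ell_k(S^n)=\ell_k(f_k^{-1}(V))$ is open in the $k$-th sphere, since $\ell_k$ is a homeomorphism onto its image.
\item If $b_0\in f^{-1}(V)$, then $x\in V$. By convergence, $\im(f_k)\subseteq V$ for all but finitely many $k$, so those spheres lie in $f^{-1}(V)$.
\end{itemize}
Thus $f^{-1}(V)$ is open in $\bbe_n$ by the definition of the topology. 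By construction $f$ is based and fully essential.

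For (2), the same argument applies, with two adjustments. First, the maps $f_k$ are unbased, so we define $f$ on the $k$-th copy of $(S^n)^+$ by $f_k$ on $S^n$ and by sending the added isolated point (which becomes $a_0$) to $x$. This is automatically well defined and continuous on each summand, because the added point is isolated in $(S^n)^+$. Second, the continuity check at $a_0$ is literally the same convergence argument. The converse direction in (2) is also verbatim, with $\mu_k$ in place of $\ell_k$.

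The only point requiring any care is continuity at the wedge point. This step is handled precisely by the equivalence between convergence to $x$ in the compact-open sense stated in the paper and the cofinite-containment clause in the shrinking wedge topology.
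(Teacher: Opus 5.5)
Your proof is correct and is the verification the paper intends: the paper gives no written proof, only calling the lemma a straightforward consequence of the shrinking wedge topology, and your argument matches the cofinite-containment clause at the wedge point with convergence to $x$, for both $\bbe_n$ and $\fe_n$, using that the added point of $(S^n)^{+}$ is isolated. One cosmetic fix: in the ``if'' direction, when $U$ is a non-open neighborhood of $x$, pull back an open set $V$ with $x\in V\subseteq U$ rather than $U$ itself.
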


\begin{remark}
The inclusion $\wildn(X)\subseteq \fwildn(X)$ holds for an arbitrary space $X$. Indeed, the canonical map $\iota:\fe_n\to \bbe_n$, which maps $a_0$ to $b_0$ and which maps the $j$-th spheres of $\fe_n$ homeomorphically to the $j$-th sphere of $\bbe_n$ is fully essential. Therefore, if $f:(\bbe_n,b_0)\to (X,x)$ is fully, essential, then so is $f\circ\iota:(\fe_n,a_0)\to (X,x)$.

In general, $\wildn(X)$ can be a proper subset of $\fwildn(X)$. For example, we have $\fwildn(\fe_n)=\{a_0\}$ and $\wildn(\fe_n)=\emptyset$ since the path components of $\fe_n$ are locally contractible.
\end{remark}

\begin{proposition}\label{zeroequalityprop}
For any space $X$, $\wild_0(X)=\fwild_{0}(X)$.
\end{proposition}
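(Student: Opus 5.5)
The inclusion $\wild_0(X)\subseteq\fwild_0(X)$ holds for every space by the preceding Remark, so it remains to prove $\fwild_0(X)\subseteq\wild_0(X)$. The plan is to unwind what "essential" means for maps out of $S^0=\{s_0,-s_0\}$ in the based and unbased settings, and then to show that a shrinking sequence of essential unbased maps can be converted into a shrinking sequence of essential based maps at the same point.

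First, the two notions of essential. An unbased map $g:S^0\to X$ is inessential exactly when it is homotopic to a constant map, that is, when $g(s_0)$ and $g(-s_0)$ lie in the same path component of $X$. A based map $f:(S^0,s_0)\to(X,x)$ is null-homotopic rel basepoint exactly when $f(-s_0)$ lies in the path component of $x$. So in both cases, essential means that the two image points lie in different path components.

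Now let $x\in\fwild_0(X)$ and take essential maps $g_k:S^0\to X$ converging to $x$. Write $a_k=g_k(s_0)$ and $b_k=g_k(-s_0)$. Both sequences $a_k$ and $b_k$ converge to $x$, and for each $k$ the points $a_k,b_k$ lie in different path components. Hence at least one of them, call it $y_k$, is not in the path component of $x$. Define the based map $f_k:(S^0,s_0)\to(X,x)$ by $f_k(s_0)=x$ and $f_k(-s_0)=y_k$. Each $f_k$ is essential by the characterization above.

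It remains to check that $f_k$ converges to $x$. For any neighborhood $U$ of $x$, eventually both $a_k$ and $b_k$ lie in $U$, so $y_k\in U$ as well, and hence $\im(f_k)=\{x,y_k\}\subseteq U$. Therefore $x\in\wild_0(X)$.

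There is no real obstacle here. The only point needing care is the identification of based null-homotopy of a map out of $S^0$ with its two points lying in the same path component, which is immediate because $S^0\times\ui$ is a disjoint union of two intervals.
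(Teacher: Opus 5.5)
Your proof is correct and is essentially the paper's argument: for each essential pair you take the image point that lies outside the path component of $x$ and pair it with $x$, which gives an essential based map. The only difference is packaging. The paper phrases this through fully essential maps $(\fe_0,a_0)\to(X,x)$ and $(\bbe_0,b_0)\to(X,x)$, while you work directly with the shrinking sequences of maps.
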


\begin{proof}
One inclusion follows from the previous remark. Suppose $x\in \fwild_0(X)$. Then there exists a fully essential map $f:(\fe_0,a_0)\to (X,x)$. Let $P_j=\{a_j,b_j\}$ denote the $j$-th copy of $S^0$ in $\fe_0$. Then $f(a_j)$ and $f(b_j)$ lie in distinct path-components of $X$. In particular, for each $j$, at least one of $f(a_j)$ or $f(b_j)$ does not lie in the path-component of $x$. Relabeling some $a_j$ and $b_j$, if necessary, we may assume that $f(a_j)$ does not lie in the path-component of $x$ for all $j\geq 1$. Define $g:\bbe_0\to X$ so that $g(0)=x$, $g(1/j)=f(a_j)$ for each $j\geq 1$. Then $g$ is fully essential, proving $x\in \wild_0(X)$.
\end{proof}

\begin{lemma}\label{comparelemma}
Let $n\geq 0$. If $X$ is first countable, then $\fwildn(X)$ is closed in $X$. Moreover, if $n\geq 1$ and $X$ is also locally path-connected, then $\wildn(X)=\fwildn(X)$.
\end{lemma}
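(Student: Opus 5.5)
For closedness, I would use a diagonal argument in the spirit of Lemma \ref{felemma}. Suppose $x$ lies in the closure of $\fwildn(X)$. Since $X$ is first countable, fix a nested countable neighborhood base $U_1\supseteq U_2\supseteq\cdots$ at $x$. For each $k$, choose a point $x_k\in U_k\cap\fwildn(X)$; if some $x_k=x$ we are already done. Since $U_k$ is a neighborhood of $x_k$ and $x_k$ is a free $\pi_n$-wild point, there is an essential unbased map $f_k:S^n\to X$ with $\im(f_k)\subseteq U_k$.

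The sequence $\{f_k\}$ then converges to $x$: given a neighborhood $V$ of $x$, some $U_K\subseteq V$, and nestedness gives $\im(f_k)\subseteq U_k\subseteq V$ for all $k\geq K$. Hence $x\in\fwildn(X)$, and no basepoint bookkeeping is needed because the maps are unbased.

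For the equality when $n\geq 1$ and $X$ is locally path-connected, the inclusion $\wildn(X)\subseteq\fwildn(X)$ is the earlier remark, so only the converse requires work. Let $x\in\fwildn(X)$ and take essential $f_k:S^n\to X$ converging to $x$. Using first countability and local path-connectedness, choose a nested base of \emph{path-connected} open neighborhoods $V_1\supseteq V_2\supseteq\cdots$ at $x$. Passing to a subsequence, assume $\im(f_k)\subseteq V_k$. Choose a path $\alpha_k:\ui\to V_k$ from $x$ to $f_k(s_0)$, and let $g_k=\alpha_k\ast f_k\in\Omega^n(X,x)$.

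Two points must be checked. First, $g_k$ is essential. If $g_k$ were based null-homotopic, then undoing the path-conjugation (using $\bar\alpha_k\ast(\alpha_k\ast f_k)\simeq f_k$) would make $f_k$ based null-homotopic at $f_k(s_0)$, and hence freely null-homotopic. This contradicts the choice of $f_k$. The subtle point is the reverse direction: an unbased map that is freely inessential is also based inessential, since a free null-homotopy of $S^n$ yields a based one up to the $\pi_1$-action, and the $\pi_1$-action carries the trivial element to itself. So essentiality transfers in both directions. Second, $\im(g_k)\subseteq V_k$ because both $\alpha_k$ and $f_k$ lie in $V_k$, so $g_k\to x$. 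Therefore $x\in\wildn(X)$.

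The main obstacle I expect is the essentiality transfer, that is, confirming that "freely essential" implies "based essential after conjugation". This holds because $[f]$ is trivial in $\pi_n(X,f(s_0))$ if and only if $f$ extends to $D^{n+1}$, which is a basepoint-free condition. The hypothesis $n\geq 1$ is needed for the path-conjugate construction; for $n=0$ the statement is instead covered by Proposition \ref{zeroequalityprop}.
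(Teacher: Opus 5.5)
Your proposal is correct and follows essentially the same route as the paper. For closedness, both choose essential maps diagonally inside a nested neighborhood base, which should be taken open so that $U_k$ is also a neighborhood of $x_k$; for the equality, both path-conjugate by a null sequence of paths from $x$ to the basepoints of the free essential maps. The paper packages the free maps as a fully essential map $(\fe_n,a_0)\to (X,x)$ and simply asserts that the shrinking paths exist and that the conjugates stay essential, whereas you build the paths from nested path-connected neighborhoods and check essentiality explicitly.
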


\begin{proof}
Suppose $\{x_m\}_{m\in\bbn}\to x$ is a convergent sequence in $X$ where $x_m\in \fwildn(X)$ for all $m$. Let $U_1\supseteq U_2\supseteq U_3\supseteq \cdots$ be a neighborhood base at $x$. Find $m_1<m_2<m_3<\cdots$ such that $x_{m_k}\in U_k$. Since $x_{m_k}\in \fwildn(X)$, there exists an essential map $f_k:S^n\to X$ with $\im(f_k)\subseteq U_k$. Now $\{f_k\}_{k\in\bbn}$ is a sequence of essential maps that converges to $x$, giving $x\in \fwildn(X)$. For the second statement, recall that $\wildn(X)\subseteq \fwildn(X)$ holds in general. Suppose $X$ is locally path-connected and $x\in \fwildn(X)$. Let $f:(\fe_n,a_0)\to (X,x)$ be a fully essential map. Let $x_j$ be the image of $s_0$ under the $j$-th restriction $f_j:S^n\to X$ of $f$. Since $X$ is locally path-connected and first countable at $x$, there exists a sequence $\{\alpha_j\}_{j\in\bbn}$ of paths $\alpha_j:\ui\to X$ that converges to $x$ and where $\alpha_j(0)=x$ and $\alpha_j(1)=x_j$ for each $j$. Let $g_j:(S^n,s_0)\to (X,x)$ be the path-conjugate of $f_j$ by $\alpha_j$. Now $\{g_j\}_{j\in\bbn}$ is a sequence of essential based maps that converges to $x$, giving $x\in \wildn(X)$.
\end{proof}

\begin{example}
The space constructed in \cite[Example 2.11]{BrazasMitraHigher} is a locally path-connected but non-first countable space $X$ for which $\wildn(X)=\fwildn(X)$ is not closed in $X$. The space constructed in \cite[Example 2.12]{BrazasMitraHigher} is a compact but non-locally path-connected metric space where $\wildn(X)$ is not closed in $X$ (in particular, $\fwildn(X)$ is the closed topologists sine curve and $\wildn(X)$ is the non-closed path-component of $\fwildn(X)$).
\end{example}

The next results are proved in \cite{BrazasMitraHigher} for based $\pi_n$-wild sets. The proof for free $\pi_n$-wild sets are essentially the same, replacing fully essential maps on $\bbe_n$ with fully essential maps on $\fe_n$.

\begin{lemma}\label{inductionsteplemma}\cite[Lemma 4.2]{BrazasMitraHigher}
If $f:X\to Y$ is $\pi_n$-injective, then $f(\wild_{n}(X))\subseteq \wild_{n}(Y)$ and $f(\fwildn(X))\subseteq \fwildn(Y)$. Moreover, any (free) homotopy $H:X\times \ui\to Y$ between $\pi_n$-injective maps $f,g:X\to Y$, restricts to a homotopy 
\begin{enumerate}
\item $\wild_{n}(X)\times \ui\to \wild_n(Y)$ between maps $f|_{\wild_{n}(X)},g|_{\wild_{n}(X)}:\wild_{n}(X)\to \wild_{n}(Y)$
\item $\fwild_{n}(X)\times \ui\to \fwild_n(Y)$ between maps $f|_{\fwild_{n}(X)},g|_{\fwild_{n}(X)}:\fwild_{n}(X)\to \fwild_{n}(Y)$.
\end{enumerate}
\end{lemma}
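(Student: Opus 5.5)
The plan is to use the characterization of wild points by fully essential maps from Lemma \ref{felemma}, together with the fact that $\pi_n$-injectivity preserves essentiality of maps from $S^n$.

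\textbf{Step 1: images of wild points are wild.} Let $x\in\wildn(X)$. By Lemma \ref{felemma} there is a fully essential map $g:(\bbe_n,b_0)\to(X,x)$. Consider $f\circ g:(\bbe_n,b_0)\to (Y,f(x))$. Its $j$-th restriction is $f\circ g_j$, where $g_j$ is an essential based map $S^n\to X$. Since $f$ is $\pi_n$-injective, $[f\circ g_j]\neq 0$ in $\pi_n(Y,f(x))$, so $f\circ g$ is fully essential. Hence $f(x)\in\wildn(Y)$. One caveat for the free case: an unbased map $S^n\to X$ is essential exactly when its class, based at the image of $s_0$, is nontrivial. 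So $\pi_n$-injectivity at every basepoint again gives that $f\circ g_j$ is essential. The same argument with $\fe_n$ in place of $\bbe_n$ then gives $f(\fwildn(X))\subseteq\fwildn(Y)$.

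\textbf{Step 2: the homotopy stays in the wild set.} Let $H$ be a homotopy from $f$ to $g$, and write $H_t=H(\cdot,t)$. It suffices to show $H_t(x)\in\wildn(Y)$ for every $x\in\wildn(X)$ and every $t$; the restriction of $H$ is then automatically continuous into the subspace $\wildn(Y)$. By Step 1 it is enough to show that each $H_t$ is $\pi_n$-injective.

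\textbf{Step 3: each $H_t$ is $\pi_n$-injective (the main obstacle).} The path $\gamma(s)=H(x,s)$, for $s\in[0,t]$, runs from $f(x)$ to $H_t(x)$. The standard change-of-basepoint fact for homotopic maps gives $H_{t*}=\gamma_\#\circ f_*$, where $\gamma_\#$ is the path-conjugation isomorphism $\pi_n(Y,f(x))\to\pi_n(Y,H_t(x))$. Concretely, for a based map $k:(S^n,s_0)\to (X,x)$, the map $(z,s)\mapsto H(k(z),s)$ is a free homotopy from $f\circ k$ to $H_t\circ k$ whose basepoint traces $\gamma$. A composite of an injection and an isomorphism is injective, so $H_t$ is $\pi_n$-injective. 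This step must be checked for every $t$, not only the endpoints, and that is the one real point of the argument.

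Applying Step 1 to $H_t$ then gives $H(\wildn(X)\times\ui)\subseteq\wildn(Y)$. Repeating the argument with $\fe_n$ gives $H(\fwildn(X)\times\ui)\subseteq\fwildn(Y)$, which proves both parts of the lemma.
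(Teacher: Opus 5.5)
Your proposal is correct and matches the argument the paper relies on. The paper defers to \cite{BrazasMitraHigher} and notes that the free case follows by replacing fully essential maps on $\bbe_n$ with fully essential maps on $\fe_n$; as in your plan, one composes fully essential maps with a $\pi_n$-injective map and uses that each stage $H_t$ is $\pi_n$-injective because $(H_t)_*=\gamma_\#\circ f_*$, which incidentally shows that the hypothesis that $g$ is $\pi_n$-injective is redundant.
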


\begin{theorem}\label{homotopyinvariance}\cite[Theorem 4.6]{BrazasMitraHigher}
For each $n\geq 1$, the homotopy types of based and free $\pi_n$-wild sets are homotopy invariants. In particular, if $f:X\to Y$ is a homotopy equivalence, then $f|_{\wildn(X)}:\wildn(X)\to \wildn(Y)$ and $f|_{\fwildn(X)}:\fwildn(X)\to \fwildn(Y)$ are homotopy equivalences.
\end{theorem}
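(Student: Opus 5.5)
The theorem is quoted from \cite[Theorem 4.6]{BrazasMitraHigher}, but I would reprove it directly from Lemma \ref{inductionsteplemma}. The plan has two parts. First, show that a homotopy equivalence is $\pi_n$-injective at every basepoint. Then use the lemma to restrict both the maps and the homotopies to the wild sets.

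Let $f:X\to Y$ be a homotopy equivalence with homotopy inverse $g:Y\to X$. Choose homotopies $H:g\circ f\simeq \mathrm{id}_X$ and $G:f\circ g\simeq \mathrm{id}_Y$.

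\emph{Step 1: $\pi_n$-injectivity.} Fix $x\in X$ and let $\alpha(t)=H(x,t)$, a path from $gf(x)$ to $x$. The standard free-homotopy argument gives
\[(g\circ f)_\ast=\phi_{\alpha}^{-1}\circ(\mathrm{id}_X)_\ast,\]
where $\phi_\alpha:\pi_n(X,gf(x))\to\pi_n(X,x)$ is the isomorphism $[h]\mapsto[\alpha^{-}\ast h]$ given by path-conjugation. Hence $(gf)_\ast$ is an isomorphism. In particular $f_\ast:\pi_n(X,x)\to\pi_n(Y,f(x))$ is injective for every $x$, so $f$ is $\pi_n$-injective. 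The same argument, with $G$ in place of $H$, shows that $g$ is $\pi_n$-injective. Compositions of $\pi_n$-injective maps are $\pi_n$-injective, so $gf$ and $fg$ are $\pi_n$-injective, and identity maps trivially are.

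If ``$\pi_n$-injective'' in Lemma \ref{inductionsteplemma} is meant in the unbased sense, so that it applies to free wild sets, we also need: $f\circ h$ inessential implies $h$ inessential. This holds because $h\simeq g\circ f\circ h$ and $g\circ f\circ h$ is null-homotopic whenever $f\circ h$ is.

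\emph{Step 2: restricting maps and homotopies.} By Lemma \ref{inductionsteplemma},
\[f(\wildn(X))\subseteq\wildn(Y)\quad\text{and}\quad g(\wildn(Y))\subseteq\wildn(X),\]
so the restrictions $f'=f|_{\wildn(X)}$ and $g'=g|_{\wildn(Y)}$ are well defined. Now apply the second part of Lemma \ref{inductionsteplemma} to $H$, which is a homotopy between the $\pi_n$-injective maps $gf$ and $\mathrm{id}_X$. It restricts to a homotopy
\[\wildn(X)\times\ui\to\wildn(X)\]
from $g'\circ f'$ to $\mathrm{id}_{\wildn(X)}$. In the same way $G$ restricts to a homotopy from $f'\circ g'$ to $\mathrm{id}_{\wildn(Y)}$. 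Therefore $f'$ is a homotopy equivalence.

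The free case is word for word the same, using part (2) of the lemma.

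The main obstacle is Step 1, specifically checking that the notion of $\pi_n$-injectivity the lemma actually requires holds at \emph{every} basepoint, and in the free version as well. The path-conjugation argument settles this. The restriction of the homotopies is then handled entirely by the lemma.
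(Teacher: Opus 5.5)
Your proof is correct and follows the paper's own approach: it is the $\kappa=1$ successor step of the proof of Theorem \ref{homotopyinvariance2}, which likewise restricts $f$, $g$ and the two homotopies via Lemma \ref{inductionsteplemma}. Your Step 1 adds the justification, which the paper only asserts, that $f$, $g$, $g\circ f$, $f\circ g$ and the identities are $\pi_n$-injective at every basepoint. Your separate unbased check is harmless but redundant, since a map $h:S^n\to X$ is freely null-homotopic iff it represents the trivial element of $\pi_n(X,h(s_0))$.
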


\begin{lemma}\label{disjointunionlemma}\cite[Proposition 3.5]{BrazasMitraHigher}
For any $n\geq 0$ and collection of spaces $\{X_{\lambda}\}_{\lambda}$, we have \[\wildn\left(\coprod_{\lambda}X_{\lambda}\right)=\coprod_{\lambda}\wildn(X_{\lambda})\text{ and }\fwildn\left(\coprod_{\lambda}X_{\lambda}\right)=\coprod_{\lambda}\fwildn(X_{\lambda}).\]
\end{lemma}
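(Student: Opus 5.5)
The statement is the disjoint union lemma: for any family $\{X_\lambda\}_\lambda$, $\wildn$ and $\fwildn$ commute with $\coprod$. The plan is to compare, point by point, the maps from $S^n$ (and from $\bbe_n$, $\fe_n$) into the coproduct with maps into the individual summands. Fix $x\in X_\mu\subseteq \coprod_\lambda X_\lambda$. Each $X_\mu$ is clopen in the coproduct, so a neighborhood base at $x$ in $\coprod_\lambda X_\lambda$ is given by the neighborhoods of $x$ in $X_\mu$. In particular, any sequence of maps converging to $x$ eventually has image in $X_\mu$. So after discarding finitely many terms, every convergent sequence of maps into the coproduct is a sequence of maps into $X_\mu$, and conversely.

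The one substantive point is that essentiality is unchanged when we pass between $X_\mu$ and the coproduct, for based and for free maps $S^n\to X_\mu$. I will treat the cases $n\ge 1$ and $n=0$ separately.

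\begin{itemize}
\item \emph{Case $n\geq 1$.} $S^n$ and $S^n\times\ui$ are connected. Hence any null-homotopy in $\coprod_\lambda X_\lambda$ of a map with image in $X_\mu$ has image in $X_\mu$, since its image is connected and meets $X_\mu$. So the inclusion $i_\mu:X_\mu\to\coprod_\lambda X_\lambda$ detects essentiality in both directions, based and free.
\item \emph{Case $n=0$.} A map $S^0\to X$ is essential exactly when its two image points lie in distinct path components. The path components of the coproduct are precisely the path components of the summands, so the same equivalence holds.
\end{itemize}

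Combining these facts gives $x\in\wildn(X_\mu)$ if and only if $x\in\wildn(\coprod_\lambda X_\lambda)$. The same argument verbatim gives the corresponding statement for $\fwildn$. Since the topology on $\coprod_\lambda\wildn(X_\lambda)$ is the subspace topology it inherits as a subset of $\coprod_\lambda X_\lambda$, this equality of sets is an equality of spaces.

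I expect no real obstacle. The only step needing care is the essentiality comparison, i.e.\ that a null-homotopy cannot leave the summand. That is exactly where connectedness of $S^n\times\ui$ (for $n\ge1$), or the description of path components (for $n=0$), is used.
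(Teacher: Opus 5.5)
Your proof is correct. The paper gives no proof of its own here (it only cites the authors' earlier paper), and your argument is the standard one: openness of each summand handles convergence, and the fact that a null-homotopy cannot leave its summand handles essentiality.

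A minor streamlining: the map $r:\coprod_\lambda X_\lambda\to X_\mu$ that is the identity on $X_\mu$ and sends every other summand to $x$ is a retraction. Composing any (based or free) null-homotopy with $r$ therefore treats all $n\geq 0$ uniformly, and you no longer need a separate $n=0$ case.
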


\begin{theorem}\label{realizingcompactmetricspacesthm}\cite[Theorem 1.2]{BrazasMitraHigher}
If $X$ is a Peano continuum, then $\wildn(X)$ is a continuum. Conversely, if $Y$ is a continuum, then there exists a Peano continuum $X$ such that
\begin{enumerate}
\item $Y\subseteq X$ and $X\backslash Y$ is a countable disjoint union of open $1$-cells and open $n$-cells.
\item $\wildn(X)=Y$,
\item $\dim(X)= \max\{n,\dim(Y)\}$.
\end{enumerate}
\end{theorem}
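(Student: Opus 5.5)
This theorem is cited from \cite[Theorem 1.2]{BrazasMitraHigher}, so the plan is to reconstruct that argument in two independent parts.

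\emph{First part: $\wildn(X)$ is a continuum when $X$ is a Peano continuum.} Since $X$ is compact metrizable, it suffices to show that $\wildn(X)$ is closed in $X$. A Peano continuum is first countable and locally path-connected. For $n\geq 1$, Lemma \ref{comparelemma} then gives $\wildn(X)=\fwildn(X)$, and the latter set is closed, so the first part follows at once. For $n=0$, a Peano continuum is locally path-connected and compact, so it has finitely many path components, each of them open. Hence no shrinking sequence of maps $S^0\to X$ can be essential, and $\wild_0(X)=\emptyset$, which is trivially closed.

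\emph{Second part: realizing a given continuum $Y$.} I will first embed $Y$ in the Hilbert cube and choose a countable dense subset $\{y_m\}$ of $Y$. I will also choose finite $\frac1k$-nets $N_k\subseteq Y$. To make the space locally path-connected, I attach arcs (open $1$-cells) joining the points of $N_k$ that are within $3/k$ of each other. These arcs are chosen with diameters tending to $0$ and with disjoint interiors, which I can arrange by working in a high-dimensional ambient space such as $Y\times[0,1]^{N}$, or in the cone over $Y$ inside $\ell_2$. This is the standard construction that makes a continuum into a Peano continuum by adding a null sequence of arcs, and it is what I would follow.

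Next, to make the points of $Y$ wild, I attach at each $y_m$ a null sequence of $n$-spheres. Concretely, for each $m$ and $k\geq m$ I attach an $n$-cell whose boundary is collapsed to $y_m$, of diameter less than $1/k$. The resulting space $X$ is compact, since everything attached forms null sequences converging into $Y$. It is also locally path-connected, because the arcs reach everywhere within the prescribed scales. Its dimension is $\max\{n,1,\dim Y\}$. When $\dim Y=0$ and $n=0$, the $1$-cells raise the dimension to $1$, so this edge case needs care: it is probably avoided by interpreting the construction so that the $0$-dimensional version uses isolated points rather than arcs.

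It remains to verify that $\wildn(X)=Y$. For the inclusion $Y\subseteq\wildn(X)$, each $y_m$ is wild because the attached spheres are essential. I expect to show this using a retraction of $X$ onto a wedge of spheres, or by the shape-theoretic argument of \cite{EK00higher}: collapsing everything except a single sphere gives a map onto $S^n$ that detects it. Since $\wildn(X)$ is closed by the first part and $\{y_m\}$ is dense in $Y$, it follows that $Y\subseteq\wildn(X)$. For the reverse inclusion, every point of $X\setminus Y$ lies in an open cell, and small neighborhoods of such a point are contractible, so $\wildn(X)\subseteq Y$.

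The main obstacle is the essentiality check. The attached arcs might create fillings of the spheres or new homotopy, so I must confirm both that the attached $n$-spheres remain essential in $X$ and that no wildness is created off $Y$. To handle this, I would define retractions $X\to S^n$ that collapse $Y$, all the arcs, and all other spheres to a point. Such maps are continuous because the attached cells form null sequences, and under them the chosen sphere maps with degree one, so it cannot be null-homotopic in $X$.
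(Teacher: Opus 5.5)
Your plan is correct for $n\geq 1$. It matches the construction the paper points to from \cite{BrazasMitraHigher}, in parallel with the proof of Theorem \ref{realizingfreetheorem}:
\begin{itemize}
\item enlarge $Y$ to a Peano continuum by a null sequence of arcs;
\item attach shrinking copies of $\bbe_n$ along a dense sequence in $Y$ (a shrinking point-attachment as in Definition \ref{shrinkingattachmentdef}; compare Lemma \ref{sanlemma});
\item get $\wildn(X)\subseteq Y$ from local contractibility of the open cells;
\item get $Y\subseteq\wildn(X)$ from single-sphere retractions, density, and closedness via Lemma \ref{comparelemma}.
\end{itemize}

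Two peripheral points need correcting. First, for $n=0$ the converse is false whenever $Y\neq\emptyset$: your own first part shows $\wild_0(X)=\emptyset$ for every Peano continuum $X$. The statement must therefore be read with $n\geq 1$, which is also what your use of Lemma \ref{comparelemma} requires. No workaround with isolated points can help.

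Second, neither ambient space you suggest for the arcs works when $Y$ is not path-connected. An arc in $Y\times[0,1]^N$ projects to a path in $Y$. In the cone over $Y$, an arc joining two path components of $Y$ must pass through the cone point, so its diameter is bounded below. Instead, use the convexity of $\ell_2$, with straight segments $[y,y']$ pushed off into fresh orthogonal coordinates, or define a shrinking-type topology abstractly. Also make sure arcs join consecutive scales, e.g.\ by taking nested nets $N_k\subseteq N_{k+1}$. Otherwise chains of arcs cannot converge to arbitrary points of $Y$, and local path-connectedness fails.
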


\begin{example}
For general continua, the sets $\fwild_n(\wild_n(X))$ and $\wild_n(\fwild_n(X))$ are not comparable. Using Theorem \ref{realizingcompactmetricspacesthm}, we can construct a Peano continuum $X_1$ for which $\wild_n(X_1)=\fwild_n(X_1)=\fe_n$. In this case, $\fwild_n(\wild_n(X_1))=\{a_0\}$ and $\wild_n(\fwild_n(X_1))=\emptyset$. Next, let $Y$ be a countably infinite disjoint union of $n$-spheres in $\bbr^{n+1}\backslash \bbe_n$ of null-diameter such that $\ov{Y}=Y\cup \bbe_n$. Set $X_2=\ov{Y}$. In this case, we have $\fwildn(\wildn(X_2))=\emptyset$ and $\wild_n(\fwildn(X_2))=\{b_0\}$. If we define $X=X_1\sqcup X_2$, then $\fwild_n(\wild_n(X))=\{a_0\}$ and $\wild_n(\fwildn(X))=\{b_0\}$.
\end{example}

\begin{definition}\label{defperfectlywild}
A space $X$ is \textit{perfectly (resp. free) $\pi_n$-wild} if $\wildn(X)=X$ (resp. $\fwildn(X)=X$).
\end{definition}

The ternary Cantor set is perfectly $\pi_0$-wild. Well-known examples of perfectly $\pi_1$-wild spaces include the Sierpinski Carpet, Menger Curve, and others \cite{ConnerEda,ConnerEda2}. Examples of perfectly $\pi_n$-wild spaces for $n\geq 2$ are constructed in \cite{BrazasMitraHigher}. Since $\wildn(X)\subseteq \fwildn(X)$ holds in general, every perfectly $\pi_n$-wild space is perfectly free $\pi_n$-wild. 

\section{Iterating the $\pi_n$-wild set operator}

For any space $X$, we recursively define a descending transfinite sequence of subspaces of $X$: Let $\wild_{n}^{0}(X)=\fwildn^{0}(X)=X$. Set $\wild_{n}^{\kappa+1}(X)=\wild_n(\wild_{n}^{\kappa}(X))$ and $\fwildn^{\kappa+1}(X)=\fwildn(\fwildn^{\kappa}(X))$ for each ordinal $\kappa\geq 0$. Define $\wild_{n}^{\kappa}(X)=\bigcap_{\lambda<\kappa}\wild_{n}^{\kappa}(X)$ and $\fwildn^{\kappa}(X)=\bigcap_{\lambda<\kappa}\fwildn^{\kappa}(X)$ if $\kappa$ is a limit ordinal. We refer to $\wild_{n}^{\kappa}(X)$ as the \textit{$\kappa$-th iterated $\pi_n$-wild set of $X$} and $\fwildn^{\kappa}(X)$ as the \textit{$\kappa$-th iterated free-$\pi_n$-wild set of $X$}.

Note that the transfinite sequences $\{\wild_{n}^{\kappa}(X)\}_{\kappa}$ and $\{\fwildn^{\kappa}(X)\}_{\kappa}$ are ``descending" in the sense that $\wild_{n}^{\kappa}(X)\subseteq \wild_{n}^{\lambda}(X)$ and $\fwildn^{\kappa}(X)\subseteq \fwildn^{\lambda}(X)$ if $\kappa\geq \lambda$. Since $X$ is a fixed space both of these transfinite sequences must stabilize, i.e. there must be an ordinal $\kappa_0$ for which $\wild_{n}^{\kappa_0}(X)=\wild_{n}^{\kappa_0+1}(X)$ and an ordinal $\kappa_1$ for which $\wild_{n}^{\kappa_1}(X)=\wild_{n}^{\kappa_1+1}(X)$ (otherwise, one could inject an arbitrarily large ordinals into the power set of $X$). 

\begin{definition}
Let $X$ be a space. 
\begin{enumerate}
\item The smallest ordinal $\kappa_0$ such that $\wild_{n}^{\kappa_0}(X)=\wild_{n}^{\kappa}(X)$ for all $\kappa\geq \kappa_0$ is the \textit{(based) $\pi_n$-wild rank of $X$} and is denoted $\rkn(X)$.
\item The smallest ordinal $\kappa_1$ such that $\fwildn^{\kappa_1}(X)=\fwildn^{\kappa}(X)$ for all $\kappa\geq \kappa_1$ is the \textit{free $\pi_n$-wild rank of $X$} and is denoted $\frkn(X)$.
\end{enumerate}
\end{definition}

\begin{remark}
Note that $\rkn(X)=\kappa_0$ (resp. $\frkn(X)=\kappa_1$) is the smallest ordinal for which the inclusion $\wildn^{\kappa_0+1}(X)\to\wildn^{\kappa_0}(X)$ (resp. $\fwildn^{\kappa_1+1}(X)\to\fwildn^{\kappa_1}(X)$) is the identity map. In particular, the transfinite sequence $\{\wild_{n}^{\kappa}(X)\}_{\kappa}$ either stabilizes in the empty set or at a perfectly $\pi_n$-wild set and $\{\fwild_{n}^{\kappa}(X)\}_{\kappa}$ either stabilizes in the empty set or at a perfectly free $\pi_n$-wild set.
\end{remark}

\begin{remark}\label{zerodimcaseremark}
In dimension $n=0$, it follows from Proposition \ref{zeroequalityprop} that for any space $X$, $\wild_{0}^{\kappa}(X)=\fwild_{0}^{\kappa}(X)$ for all ordinals $\kappa$. Thus $\rk_0(X)=\frk_0(X)$.
\end{remark}

\begin{example}
If $\wildn(X)$ is finite and non-empty, then $\rkn(X)=2$. For example, since $\wildn(\bbe_n)=\{b_0\}$, we have $\rkn(\bbe_n)=2$.
\end{example}

\begin{example}[The wild circle and its variants]\label{wildcircleexample}
Consider the \textit{wild circle} $X\subseteq \bbr^2$ from \cite{BrazasMitraHigher}, where a shrinking sequence of circles are attached to $S^1$ in a dense fashion so that $X$ is a one-dimensional planar Peano continuum with $\wild_1(X)=S^1$. Take the basepoint of $X$ to be a point $x_0$ that lies in $S^1$ and let $Y=\sw_{j\in \bbn}X$ be a shrinking wedge of a sequence of copies of $X$ with wedgepoint $y_0$. Then $Y$ is also a one-dimensional Peano continuum, which can be embedded in the plane. We have $\wild_{1}(Y)\cong \bbe_1$, $\wild_{1}^{2}(Y)=\{y_0\}$, and $\wild_{1}^{3}(Y)=\emptyset$ giving $\rk_1(Y)=3$. By replacing $S^1$ with $S^n$ and $\bbe_1$ with $\bbe_n$, one obtains an analogous example for the higher-dimensional ranks.
\end{example}

Extending, the previous example, repeated application of Theorem \ref{realizingcompactmetricspacesthm} implies that for any $n$-dimensional compact metric space $Y$, there exists an $n$-dimensional Peano continuum $X$ and finite ordinal $\kappa\in\omega$ such that $\wild_{n}^{\kappa}(X)=Y$ and such that $\wild_{n}^{\lambda}(X)$ is a Peano continuum for all $0\leq \lambda\leq \kappa-1$. Taking $Y$ to be a one-point space, we have the following.

\begin{corollary}
For every $\kappa\in\omega$, there exists an $n$-dimensional Peano continuum $X$ with $\rkn(X)=\kappa$.
\end{corollary}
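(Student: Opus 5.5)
Write $\kappa\in\omega$ for the given finite ordinal. The plan is to iterate Theorem \ref{realizingcompactmetricspacesthm}, building a chain of Peano continua from the inside out, starting from a one-point space.

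The cases $\kappa=0$ and $\kappa=1$ are handled directly. For $\kappa=0$, the empty space would do, but if a Peano continuum is required to be nonempty we take an $n$-sphere. For $n\geq 1$ the sphere $S^n$ has $\wildn(S^n)=\emptyset$ because it is locally contractible. Since $\wild_n^{0}(S^n)=S^n\neq\emptyset=\wild_n^{1}(S^n)$, this gives $\rkn(S^n)=1$. So $S^n$ realizes $\kappa=1$, and a point (or, conventionally, the empty space) realizes $\kappa=0$.

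For $\kappa\geq 2$, set $Y_{\kappa-1}=\{y\}$, a one-point space. Recursively, given a continuum $Y_{j}$ of dimension at most $n$, apply Theorem \ref{realizingcompactmetricspacesthm} to obtain a Peano continuum $Y_{j-1}\supseteq Y_j$ with $\wildn(Y_{j-1})=Y_j$ and $\dim Y_{j-1}=\max\{n,\dim Y_j\}=n$. After $\kappa-1$ steps we have $X=Y_0$, an $n$-dimensional Peano continuum. The key observation is that the operator $\wildn$ is intrinsic to the space it is applied to. Hence $\wild_n^{j}(X)=Y_j$ by induction on $j$, using $\wild_n^{j+1}(X)=\wildn(Y_j)=Y_{j+1}$. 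It follows that $\wild_n^{\kappa-1}(X)=\{y\}$. Since a one-point space is locally contractible, $\wildn(\{y\})=\emptyset$, so $\wild_n^{\kappa}(X)=\emptyset$ and the sequence is constant from $\kappa$ on.

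It remains to check minimality, i.e.\ that $\wild_n^{j+1}(X)\neq \wild_n^{j}(X)$ for every $j<\kappa$. This holds because the sets $Y_0\supsetneq Y_1\supsetneq\cdots\supsetneq\{y\}\supsetneq\emptyset$ are strictly decreasing. Indeed, a Peano continuum is never perfectly $\pi_n$-wild when it arises from Theorem \ref{realizingcompactmetricspacesthm}, since $X\backslash Y$ contains open cells, which are nonempty provided the construction attaches at least one cell. To be safe, we can ensure strictness directly: $\wildn(Y_{j-1})=Y_j$ is a proper subset as soon as $Y_{j-1}\neq Y_j$. If the theorem's $X$ happened to equal $Y$, that would force $Y$ to be perfectly wild, which fails for $Y_{\kappa-1}=\{y\}$ and then inductively fails for each $Y_j$ because $\wildn(Y_j)=Y_{j+1}\neq Y_j$.

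The only potential obstacle is this strictness bookkeeping together with the edge case $n=0$. When $n=0$, the dimension condition $\dim=\max\{0,\dim Y\}$ and the notion of Peano continuum interact oddly, since Theorem \ref{realizingcompactmetricspacesthm} attaches $1$-cells. I would note that its dimension statement still yields dimension $\max\{n,\dim Y\}$, and otherwise restrict attention to $n\geq 1$, as the surrounding discussion implicitly does.
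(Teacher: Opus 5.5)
Your argument for $\kappa\geq 2$ is the same as the paper's. You start from a point and apply Theorem \ref{realizingcompactmetricspacesthm} $\kappa-1$ times, so that $\wildn^{\kappa-1}(X)=\{y\}$ and $\wildn^{\kappa}(X)=\emptyset$. Using $S^n$ for $\kappa=1$ is more careful than the paper's sketch, since zero iterations from a point give a $0$-dimensional space. The strictness discussion can be dropped: every $\wildn^{j}(X)$ with $j<\kappa$ contains $y$ while $\wildn^{\kappa}(X)=\emptyset$, so the sequence cannot stabilize before $\kappa$.

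The case $\kappa=0$, however, is wrong as written. $\rkn(X)=0$ means $\wildn(X)=X$, i.e.\ $X$ is perfectly $\pi_n$-wild. A point has $\wildn(\{y\})=\emptyset\neq\{y\}$, a fact you use yourself a paragraph later, so its rank is $1$. The same holds for $S^n$. The empty space has rank $0$ but is not an $n$-dimensional Peano continuum. The paper's one-point-based argument also only produces ranks $\geq 1$, so this case genuinely needs a separate example.

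What is needed is a nonempty perfectly $\pi_n$-wild $n$-dimensional Peano continuum. For $n=1$ the Menger curve or Sierpinski carpet works, and in general the $n$-dimensional Menger compactum $\mu^n$ does. Near every point of $\mu^n$ there are arbitrarily small $n$-spheres: the boundaries of $(n+1)$-faces of surviving cubes in the defining subdivision. Each is essential because $\dim\mu^n=n$ gives $\check{H}^{n+1}(\mu^n,S^n)=0$, so $\check{H}^{n}(\mu^n)\to\check{H}^{n}(S^n)\cong\bbz$ is onto, which a null-homotopic inclusion could not achieve. Lemma \ref{comparelemma} then turns $\fwildn(\mu^n)=\mu^n$ into $\wildn(\mu^n)=\mu^n$.

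Finally, your hedge about $n=0$ can be sharpened. Every Peano continuum is path connected, so $\wild_0(X)=\emptyset$, and for $n=0$ the statement holds only when $\kappa=1$. Restricting to $n\geq 1$ is therefore forced, not just convenient.
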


\begin{example}[A rank $\omega+1$ example]\label{omegaplusoneexample}
Let $X_1=\{\ast\}$ and $X_2=\bbe_n$. Using Theorem \ref{realizingcompactmetricspacesthm}, we can construct an $n$-dimensional Peano continuum $X_3$ where $X_3\backslash X_2$ is a countable union of open $n$-cells and $\wildn(X_3)=X_2$. Moreover, we may repeat this construction to obtain a sequence of $n$-dimensional Peano continua $\bbe_n=X_2\subseteq X_3\subseteq X_4\subseteq X_5\subseteq \cdots$ where $\wildn(X_{k+1})=X_k$, $\rkn(X_k)=k$ and $X_{k+1}$ is obtained from $X_k$ by attaching a countable collection of shrinking copies of $\bbe_n$ along a dense set. Let $X=\sw_{k\in\bbn}X_k$ be the shrinking wedge with wedgepoint $x_0$ and note that $\wildn(X)=\sw_{m\geq 2}X_m$. Note that $\wildn(X)\cong X$ although the inclusion $\wildn(X)\to X$ is not a homotopy equivalence. Iterating the wild set gives $\wildn^{k}(X)=\sw_{m\geq k+1}X_m$. Since $\{x_0\}=\bigcap_{k\in\omega}\wildn^{k}(X)$, we have $\wildn^{\omega+1}(X)=\emptyset$ and thus $\rkn(X)=\omega+1$.
\end{example}

\begin{example}[A rank $\omega$ example]\label{omegaexample}
We can modify the previous example slightly to obtain an $n$-dimensional Peano continuum with $\pi_n$-wild rank $\omega$ and whose sequence of iterated wild sets terminates at the empty set. As before, take $X_1=\{\ast\}$ and recursively apply Theorem \ref{realizingcompactmetricspacesthm} to construct $n$-dimensional Peano continua $X_2,X_3,X_4,\dots$ so that $\wildn(X_k)=X_{k-1}$ for all $k\geq 2$. Let $x_k$ be a given basepoint in $X_k$ and let $Y_k=(X_k,x_k)\vee (\ui,0)$ be the space given by attaching a ``whisker" to $X_k$. We identify $X_k$ naturally as a subspace of $Y_k$ and let $y_k$ denote the image of $1$ (end of the whisker) in $Y_k$. Let $Y=\sw_{k\in\bbn}(Y_k,y_k)$ with wedgepoint $y_0$ and note that $Y$ is an $n$-dimensional Peano continuum. We have $\wildn(Y)=\{y_0\}\cup \bigcup_{k\geq 2}\wild_n(X_k)$. However, $y_0$ will not lie in the second $\pi_n$-wild set since $\{y_0\}$ is a simply connected path component of $\wildn(Y)$. Thus $\wild_{n}^{k}(Y)=\bigcup_{m\geq k+1}\wild_n(X_m)$ for all $k\geq 2$. Since $\wildn^{\omega}(Y)=\bigcap_{k\in\omega}\wild_{n}^{k}(Y)=\emptyset$, we have $\rkn(Y)=\omega$. 
\end{example}

\begin{remark}[Cardinal Bounds]\label{upperbound}
For any space $X$ and $n\geq 0$, the axiom of choice ensures that $|\rkn(X)|\leq |X|$ and $|\frkn(X)|\leq |X|$: if $\rkn(X)=\kappa_0$, choose $x_{\lambda}\in \wildn^{\lambda}(X)\backslash \wildn^{\lambda+1}(X)$ for each $\lambda\in \kappa_0$ and define injection $\kappa_0\to X$ by $\lambda\mapsto x_{\lambda}$. The same argument applies for free $\pi_n$-wild rank. For example, if $X$ is a separable metrizable space, then $\max\{|\rkn(X)|,|\frkn(X)|\}\leq 2^{\aleph_{0}}$.
\end{remark}

An advantage of the free $\pi_n$-wild rank is the following.

\begin{theorem}\label{countablefreerank}
If $X$ is second countable, then $\frkn(X)$ is a countable ordinal.
\end{theorem}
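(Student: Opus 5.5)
The idea is to treat the free wild sets exactly like Cantor--Bendixson derivatives: show that every stage $\fwildn^{\kappa}(X)$ is a closed subset of $X$, and then use second countability to bound the length of any strictly descending transfinite chain of closed sets by a countable ordinal.

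First I will check that $X$ is first countable, which follows since it is second countable. Every subspace of $X$ is again second countable, hence first countable. By transfinite induction on $\kappa$, I show that $\fwildn^{\kappa}(X)$ is closed in $X$.
\begin{itemize}
\item For $\kappa=0$ there is nothing to prove.
\item At a successor, Lemma \ref{comparelemma} applied to the first countable space $\fwildn^{\kappa}(X)$ shows that $\fwildn^{\kappa+1}(X)$ is closed in $\fwildn^{\kappa}(X)$. That set is closed in $X$ by hypothesis, so $\fwildn^{\kappa+1}(X)$ is closed in $X$.
\item At a limit ordinal, an intersection of closed sets is closed.
\end{itemize}
Note that this step uses only first countability, and it works for every $n\geq 0$.

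Next I fix a countable base $\{B_m\}_{m\in\bbn}$ of $X$. For each closed set $C$, let $N(C)=\{m\mid B_m\cap C=\emptyset\}$. Since $C$ is closed, $X\backslash C$ is the union of the $B_m$ with $m\in N(C)$. Hence $C\mapsto N(C)$ is injective on closed sets and reverses strict inclusion: if $C'\subsetneq C$ are closed, then $N(C)\subsetneq N(C')$, because some point of $C\backslash C'$ has a basic neighborhood $B_m$ missing $C'$ but meeting $C$.

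Let $\kappa_1=\frkn(X)$. For each $\lambda<\kappa_1$ we have $\fwildn^{\lambda+1}(X)\subsetneq\fwildn^{\lambda}(X)$. To see this, note that if equality held at some $\lambda$, then the sequence would be constant from $\lambda$ onward by induction (successors trivially, limits as intersections of a constant tail), contradicting the minimality of $\kappa_1$. So I choose $m_\lambda\in N(\fwildn^{\lambda+1}(X))\backslash N(\fwildn^{\lambda}(X))$. The map $\lambda\mapsto m_\lambda$ is injective: if $\lambda<\lambda'$, then $m_\lambda\in N(\fwildn^{\lambda+1}(X))\subseteq N(\fwildn^{\lambda'}(X))$, while $m_{\lambda'}\notin N(\fwildn^{\lambda'}(X))$. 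Thus $\kappa_1$ injects into $\bbn$ and is countable.

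The only real issue is the closedness step. It relies on Lemma \ref{comparelemma} being applied to the subspace $\fwildn^{\kappa}(X)$ rather than to $X$, together with the observation that wildness is computed intrinsically in that subspace. The lemma holds for arbitrary first countable spaces, so no local path-connectedness is needed. This is precisely the point where the free version succeeds and the based version fails.
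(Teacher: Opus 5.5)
Your proof is correct and follows the paper's route. You show each stage is closed via Lemma \ref{comparelemma}, applied as you note to the first countable subspaces $\fwildn^{\kappa}(X)$, and then use the standard fact that a strictly descending transfinite chain of closed sets in a second countable space has countable length; the paper cites that fact from the literature, while you prove it directly with the countable base.
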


\begin{proof}
It follows from Lemma \ref{comparelemma} that $\{\fwildn^{\lambda}(X)\}_{\lambda}$ is a descending transfinite sequence of closed subspaces of $X$. It is well-known that any such sequence in a second countable space must stabilize at a countable ordinal \cite{KechrisDST}.
\end{proof}

In dimension $n=0$, Theorem \ref{countablefreerank} and Remark \ref{zerodimcaseremark} combine to give the following.

\begin{corollary}
If $X$ is second countable, then $\rk_0(X)$ is countable.
\end{corollary}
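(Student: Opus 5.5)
The corollary is the special case $n=0$ of the two preceding results, so the plan is simply to combine them. By Remark \ref{zerodimcaseremark}, which relies on Proposition \ref{zeroequalityprop}, we have $\wild_{0}^{\kappa}(X)=\fwild_{0}^{\kappa}(X)$ for every ordinal $\kappa$, and hence $\rk_0(X)=\frk_0(X)$.

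The first thing to confirm is that this identification really holds at every stage of the transfinite recursion. I would check it by induction on $\kappa$.
\begin{itemize}
\item At $\kappa=0$, both sides equal $X$.
\item At a successor stage, suppose $\wild_{0}^{\kappa}(X)=\fwild_{0}^{\kappa}(X)=Z$. Proposition \ref{zeroequalityprop} holds for every space, so applying it to $Z$ gives $\wild_0(Z)=\fwild_0(Z)$.
\item At a limit stage, both sides are the intersection of the same family of sets.
\end{itemize}
Because the two transfinite sequences coincide, their stabilization ordinals also coincide.

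Next I apply Theorem \ref{countablefreerank} with $n=0$. Since $X$ is second countable, $\frk_0(X)$ is a countable ordinal.

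The only hypothesis that needs checking is that Theorem \ref{countablefreerank} covers $n=0$. Its proof uses Lemma \ref{comparelemma}, whose first statement (closedness of $\fwildn(X)$ in a first countable space) is stated for all $n\geq 0$. At each stage the iterated set is a subspace of $X$, hence itself second countable and first countable, so Lemma \ref{comparelemma} applies to it. Closedness in a closed subspace gives closedness in $X$, and the argument goes through. Therefore $\rk_0(X)=\frk_0(X)$ is countable.

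I do not expect any real obstacle here: the argument is a direct combination of two stated results.
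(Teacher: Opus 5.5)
Your proposal is correct and is exactly the paper's argument: the corollary comes from combining Remark~\ref{zerodimcaseremark}, which gives $\wild_0^{\kappa}(X)=\fwild_0^{\kappa}(X)$ for all $\kappa$ and hence $\rk_0(X)=\frk_0(X)$, with Theorem~\ref{countablefreerank} at $n=0$. Your transfinite induction and your check that the iterated free wild sets stay closed in $X$ only spell out details the paper leaves implicit.
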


\begin{proposition}\label{onedimprop}
If $X$ is one-dimensional, then $\wild_{1}^{\kappa}(X)\subseteq \fwild_{1}^{\kappa}(X)$ for all $\kappa$.
\end{proposition}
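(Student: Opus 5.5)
The plan is a transfinite induction on $\kappa$, driven by one fact from the homotopy theory of one-dimensional spaces: \emph{if $A\subseteq B$ are subspaces of a one-dimensional space, then the inclusion $A\hookrightarrow B$ is $\pi_1$-injective.} This is the Cannon--Conner/Eda theorem that a loop in a one-dimensional space is null-homotopic if and only if its reduced representative is constant. Reducing a loop in $A$ only deletes subloops, so it happens inside the image of the loop and hence inside $A$. Consequently a loop in $A$ that is null-homotopic in $B$ is already null-homotopic in $A$.

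A caveat: we need every subspace of $X$ to again be (at most) one-dimensional. This is monotonicity of dimension, which I would use in the separable metric setting where the cited results live, and which I would state as a standing assumption if necessary.

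First, a monotonicity lemma. If $A\subseteq B\subseteq X$ with $X$ one-dimensional, then $\wild_1(A)\subseteq \wild_1(B)\subseteq \fwild_1(B)$.
\begin{itemize}
\item Let $x\in\wild_1(A)$. By Lemma \ref{felemma}, take a fully essential map $f:(\bbe_1,b_0)\to (A,x)$.
\item Compose $f$ with the inclusion $A\hookrightarrow B$. Each restriction $f_j$ stays essential in $B$ by $\pi_1$-injectivity, so the composite is fully essential into $B$ and $x\in\wild_1(B)$.
\item The inclusion $\wild_1(B)\subseteq\fwild_1(B)$ is the general remark made after Lemma \ref{felemma}.
\end{itemize}
Equivalently, this is Lemma \ref{inductionsteplemma} applied to the $\pi_1$-injective inclusion $A\hookrightarrow B$.

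The induction itself runs as follows.
\begin{itemize}
\item For $\kappa=0$ both sets equal $X$.
\item For a successor $\kappa+1$, assume $\wild_1^{\kappa}(X)\subseteq\fwild_1^{\kappa}(X)$. Apply the lemma with $A=\wild_1^{\kappa}(X)$ and $B=\fwild_1^{\kappa}(X)$ to get
\[\wild_1^{\kappa+1}(X)=\wild_1(A)\subseteq \fwild_1(B)=\fwild_1^{\kappa+1}(X).\]
\item For a limit $\kappa$, intersect the inclusions $\wild_1^{\lambda}(X)\subseteq\fwild_1^{\lambda}(X)$ over all $\lambda<\kappa$.
\end{itemize}

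The main obstacle is the $\pi_1$-injectivity of subspace inclusions in one-dimensional spaces, together with the hypothesis that subspaces of $X$ remain one-dimensional. Note that the free inclusion $\fwild_1(A)\subseteq\fwild_1(B)$ is not what is needed; the argument only uses based maps from $\bbe_1$. I would cite the Cannon--Conner result directly rather than reprove it, and otherwise the argument is formal.
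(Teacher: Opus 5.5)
Your proposal is correct and is essentially the paper's proof: a transfinite induction whose successor step uses the Cannon--Conner $\pi_1$-injectivity of subspace inclusions in one-dimensional spaces to get $\wild_1(\wild_1^{\kappa}(X))\subseteq \wild_1(\fwild_1^{\kappa}(X))\subseteq \fwild_1(\fwild_1^{\kappa}(X))$, with the limit step handled by intersection. Your caveat about subspaces staying one-dimensional addresses a point the paper leaves implicit when it applies the result with $B=\fwild_1^{\kappa}(X)$ as the ambient space; your phrasing for $A\subseteq B\subseteq X$ avoids this, because injectivity of $\pi_1(A)\to\pi_1(X)$ already forces injectivity of $\pi_1(A)\to\pi_1(B)$.
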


\begin{proof}
The proof is by induction and the general inclusion $\wild_1(X)\subseteq \fwild_1(X)$ serves as the initial step. The limit ordinal case is clear so we focus on the successor ordinal case. Suppose that $\wild_{1}^{\kappa}(X)\subseteq \fwild_{1}^{\kappa}(X)$ for some ordinal $\kappa$. If $B$ is a one-dimensional Hausdorff space and $A\subseteq B$ then the inclusion $A\to B$ is $\pi_1$-injective \cite[Corollary 3.3]{CConedim} and thus $\wild_1(A)\subseteq \wild_1(B)$. Applying $\wild_1$ to the inclusion $\wild_{1}^{\kappa}(X)\subseteq \fwild_{1}^{\kappa}(X)$ gives $\wild_1(\wild_{1}^{\kappa}(X))\subseteq \wild_1(\fwild_{1}^{\kappa}(X))$. Since $\wild_1(Z)\subseteq \fwild_1(Z)$ holds in general, we have $\wild_1(\fwild_{1}^{\kappa}(X))\subseteq \fwild_1(\fwild_{1}^{\kappa}(X))$. Combining these inclusions gives $\wild_{1}^{\kappa+1}(X)\subseteq \fwild_{1}^{\kappa+1}(X)$.
\end{proof}

\begin{remark}
When $X$ is a one-dimensional Hausdorff space, Proposition \ref{onedimprop} does not imply a general relationship between the $\rk_1(X)$ and $\frk_1(X)$. However, if $\{\fwild_{1}^{\kappa}(X)\}_{\kappa}$ stabilizes at the empty space, then so does $\{\wild_{1}^{\kappa}(X)\}_{\kappa}$ and we have $\rk_1(X)\leq \frk_1(X)$. 
\end{remark}

\begin{remark}[Rank is not homotopy invariant]
Neither the based $\pi_n$-wild rank nor the free $\pi_n$-wild rank is an invariant of homotopy type. For instance, let $X$ be any perfectly $\pi_n$-wild set, e.g. the Menger curve if $n=1$. Let $Y=X\vee \ui$ consists of $X$ with a ``whisker" attached. Then $\wildn(Y)=X$ and the inclusion $X\to Y$ is a homotopy equivalence. Hence, $X\simeq Y$ but $\rkn(Y)=\frkn(Y)=1\neq 0=\rkn(X)=\frkn(X)$.

Although we do not study it in detail here, it is possible to define a potentially useful version of the $\pi_n$-wild rank that does happen to be homotopy invariant. One might define the \textit{homotopy $\pi_n$-wild rank of $X$} to be the smallest ordinal $\horkn(X)=\kappa_0$ for which the inclusion $\wildn^{\lambda}(X)\to\wildn^{\kappa_0}(X)$ is a homotopy equivalence for all $\lambda>\kappa_0$. It is always the case that $\horkn(X)\leq \rkn(X)$ and it is possible that $\rkn(X)=\horkn(X)+\lambda$ for an arbitrarily large ordinal $\lambda$.
\end{remark}

\begin{lemma}\label{initialsumrank}
If $X$ is a space and $\wildn^{\lambda}(X)\cong A$ for some ordinal $\lambda<\rkn(X)$, then $\rkn(X)=\lambda+\rkn(A)$.
\end{lemma}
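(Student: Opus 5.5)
The plan is to show that the iterated wild sets of $X$ past stage $\lambda$ are exactly the iterated wild sets of $A$, shifted by $\lambda$. Let $h:\wildn^{\lambda}(X)\to A$ be the given homeomorphism. Since $\wildn$ is defined intrinsically from the topology of a space (via essential maps $S^n\to Z$ and convergence), it commutes with homeomorphisms: $h(\wildn(Z))=\wildn(h(Z))$ for any homeomorphism $h$ defined on $Z$. By transfinite induction on $\mu$ this gives $h(\wildn^{\mu}(Z))=\wildn^{\mu}(h(Z))$ for every ordinal $\mu$. The successor step is the previous remark applied to the restriction of $h$ to $\wildn^{\mu}(Z)$; the limit step holds because a bijection preserves intersections. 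So it suffices to prove the identity
\[\wildn^{\lambda+\mu}(X)=\wildn^{\mu}(\wildn^{\lambda}(X))\quad\text{for all ordinals }\mu.\]
Transporting this through $h$ gives $h(\wildn^{\lambda+\mu}(X))=\wildn^{\mu}(A)$.

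The identity itself is proved by transfinite induction on $\mu$, using ordinal addition. For $\mu=0$ it is trivial. For a successor,
\[\wildn^{\lambda+\mu+1}(X)=\wildn(\wildn^{\lambda+\mu}(X))=\wildn(\wildn^{\mu}(\wildn^{\lambda}(X))),\]
and the right-hand side is $\wildn^{\mu+1}(\wildn^{\lambda}(X))$. For a limit $\mu$, $\lambda+\mu$ is a limit and $\lambda+\mu=\sup_{\nu<\mu}(\lambda+\nu)$. Because the sequence is descending, $\wildn^{\lambda+\mu}(X)=\bigcap_{\nu<\mu}\wildn^{\lambda+\nu}(X)$: intersecting over the cofinal subfamily $\{\lambda+\nu\}_{\nu<\mu}$ of ordinals below $\lambda+\mu$ gives the same set. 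By the inductive hypothesis this equals $\bigcap_{\nu<\mu}\wildn^{\nu}(\wildn^{\lambda}(X))=\wildn^{\mu}(\wildn^{\lambda}(X))$. This cofinality point is the only place needing care, and it is where I expect any difficulty to lie.

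Finally, compare ranks. Let $\rho=\rkn(A)$. The sequence $\{\wildn^{\kappa}(X)\}_{\kappa}$ is constant from $\lambda+\rho$ on, since for $\kappa\geq\lambda+\rho$ we can write $\kappa=\lambda+\mu$ with $\mu\geq\rho$ and then $\wildn^{\kappa}(X)\cong\wildn^{\mu}(A)=\wildn^{\rho}(A)$ via $h$. Hence $\rkn(X)\leq\lambda+\rho$. For the reverse inequality, suppose $\rkn(X)=\kappa_0<\lambda+\rho$. The hypothesis $\lambda<\rkn(X)$ gives $\kappa_0>\lambda$, so $\kappa_0=\lambda+\mu_0$ with $\mu_0<\rho$. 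Then for all $\mu\geq\mu_0$, $\wildn^{\mu}(A)=h(\wildn^{\lambda+\mu}(X))=h(\wildn^{\kappa_0}(X))=\wildn^{\mu_0}(A)$, contradicting the minimality of $\rho$. Thus $\rkn(X)=\lambda+\rkn(A)$.

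Note that the hypothesis $\lambda<\rkn(X)$ is actually only used to write $\kappa_0$ as $\lambda+\mu_0$. If instead $\rkn(X)\leq\lambda$, the sequence is already constant at stage $\lambda$, so $\rkn(A)=0$ and the formula would read $\lambda$, which can fail. This explains why the assumption is needed.
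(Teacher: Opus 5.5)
Your proof is correct and takes the paper's approach: the paper identifies the tail $\{\wildn^{\kappa}(X)\}_{\kappa\geq\lambda}$ with the iterated wild sets of $A$ and reads off the order type, and your transfinite induction for $\wildn^{\lambda+\mu}(X)=\wildn^{\mu}(\wildn^{\lambda}(X))$ (including the cofinality argument at limits) together with the two-sided rank comparison makes that identification rigorous. One small refinement to your closing remark: when $\rkn(X)=\lambda$ both sides equal $\lambda$, so the formula holds there too and your argument really only needs $\lambda\leq\rkn(X)$.
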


\begin{proof}
Identify $A$ with $\wildn^{\lambda}(X)$. The transfinite sequence $\{\wild^{\kappa}(X)\}_{\kappa}$ consists of the initial segment $\{\wild^{\kappa}(X)\}_{\kappa<\lambda}$ followed by $\{\wild^{\kappa}(X)\}_{\lambda\leq\kappa\leq \rkn(X)}$ (whose first term is $A$), which is then followed by the constant terminal segment $\{\wild^{\kappa}(X)\}_{\kappa>\rkn(X)}$. If $\mu$ is an ordinal that is order isomorphic to the ordinal segment $[\lambda,\rkn(X)]$, then $\rkn(A)=\mu$ and $\rkn(X)=\lambda+\mu$. 
\end{proof}

\begin{theorem}\label{homotopyinvariance2}
If $f:X\to Y$ and $g:Y\to X$ are homotopy inverses, then, for any ordinal $\kappa$, so are the restrictions $f_{\kappa}:\wild_{n}^{\kappa}(X)\to \wild_{n}^{\kappa}(Y)$ and $g_{\kappa}:\wild_{n}^{\kappa}(Y)\to \wild_{n}^{\kappa}(X)$
\end{theorem}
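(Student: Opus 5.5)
Transfinite induction on $\kappa$ works, provided we carry a stronger hypothesis than the statement itself. Fix homotopies $H:X\times\ui\to X$ from $g\circ f$ to $id_X$ and $G:Y\times\ui\to Y$ from $f\circ g$ to $id_Y$. The inductive statement $P(\kappa)$ will say the following. First, $f(\wild_n^{\kappa}(X))\subseteq\wild_n^{\kappa}(Y)$ and $g(\wild_n^{\kappa}(Y))\subseteq\wild_n^{\kappa}(X)$. Second, $H$ maps $\wild_n^{\kappa}(X)\times\ui$ into $\wild_n^{\kappa}(X)$. Third, $G$ maps $\wild_n^{\kappa}(Y)\times\ui$ into $\wild_n^{\kappa}(Y)$. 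Given $P(\kappa)$, the restrictions $f_\kappa,g_\kappa$ are well defined. The restricted homotopies $H|$ and $G|$ then show $g_\kappa f_\kappa\simeq id$ and $f_\kappa g_\kappa\simeq id$, which is the claim. $P(0)$ is trivial.

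For the successor step, assume $P(\kappa)$ and write $A=\wild_n^{\kappa}(X)$, $B=\wild_n^{\kappa}(Y)$. By $P(\kappa)$, $f_\kappa:A\to B$ and $g_\kappa:B\to A$ are homotopy inverses via the restrictions of $H$ and $G$, so $f_\kappa$ and $g_\kappa$ induce bijections on $\pi_n$ at every basepoint. In particular they are $\pi_n$-injective, and so are $g_\kappa f_\kappa$, $f_\kappa g_\kappa$ and the identities. Lemma \ref{inductionsteplemma} then gives $f(\wild_n(A))\subseteq\wild_n(B)$ and $g(\wild_n(B))\subseteq\wild_n(A)$. Next apply the second part of Lemma \ref{inductionsteplemma} to the homotopy $H|_{A\times\ui}:A\times\ui\to A$, which joins the $\pi_n$-injective maps $g_\kappa f_\kappa$ and $id_A$. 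It restricts to a homotopy $\wild_n(A)\times\ui\to\wild_n(A)$, and the same holds for $G$. This yields $P(\kappa+1)$.

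For a limit ordinal $\kappa$, $\wild_n^{\kappa}$ is the intersection of the earlier terms. If $x\in\wild_n^{\kappa}(X)$, then $x\in\wild_n^{\mu}(X)$ for all $\mu<\kappa$, so $f(x)\in\wild_n^{\mu}(Y)$ for all $\mu<\kappa$ and hence $f(x)\in\wild_n^{\kappa}(Y)$. The homotopy conditions hold for the same reason: each track $H(\{x\}\times\ui)$ lies in every $\wild_n^{\mu}(X)$, and so in their intersection. This gives $P(\kappa)$.

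The key point, and the only place needing care, is why the induction must carry the homotopies themselves. The mere fact that $\wild_n^{\kappa}(X)\simeq\wild_n^{\kappa}(Y)$ through some maps would not let us pass through limit stages. It also would not ensure that the maps at each stage are restrictions of the original $f$ and $g$. The successor step also requires that the hypotheses of Lemma \ref{inductionsteplemma} hold at every basepoint, including for non-path-connected $A$. This is fine because $f_\kappa$ is a homotopy equivalence, and a free homotopy equivalence induces isomorphisms on all $\pi_n(A,a)\to\pi_n(B,f(a))$ via path-conjugation along the tracks of the homotopy. Here we use that those tracks lie in $A$ by $P(\kappa)$. The case $n=0$ is handled the same way, with $\pi_0$-injectivity meaning injectivity on path components.
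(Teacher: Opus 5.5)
Your proposal is correct and follows the paper's proof essentially step for step. Both argue by transfinite induction, carrying the restrictions of the fixed maps $f,g$ and homotopies $H,G$, applying Lemma \ref{inductionsteplemma} at successor stages and taking intersections at limit stages. Your explicit check that the restricted maps $f_\kappa,g_\kappa$ are $\pi_n$-injective (they are homotopy equivalences via the restricted homotopies) fills in a point the paper leaves implicit.
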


\begin{proof}
We proceed by transfinite induction. Let $f_0=f$, $g_0=g$, and fix homotopies $H_0:X\times I\to X$ from $id_X$ to $g\circ f$ and $G_0:Y\times I\to Y$ from $id_Y$ to $f\circ g$. Note that all of the maps $f_0,g_0,H_0,G_0$ are $\pi_n$-injective. Suppose that for all $\lambda<\kappa$, the respective restrictions of $f_0$, $g_0$, $H_0$, and $G_0$ give homotopy inverses $f_{\lambda}:\wild_{n}^{\lambda}(X)\to\wild_{n}^{\lambda}(Y) $ and $g_{\lambda}:\wild_{n}^{\lambda}(Y)\to\wild_{n}^{\lambda}(X) $ with homotopies $H_{\lambda}$ from $id_{\wild_{n}^{\lambda}(X)}$ to $g_{\lambda}\circ f_{\lambda}$ and $G_{\lambda}$ from $id_{\wild_{n}^{\lambda}(Y)}$ to $f_{\lambda}\circ g_{\lambda}$.

If $\kappa=\lambda+1$ is a successor ordinal, we apply Lemma \ref{inductionsteplemma}: let $f_{\kappa}:\wild_{n}^{\kappa}(X)\to \wild_{n}^{\kappa}(Y)$ be the restriction of $f_{\lambda}$ to $\wild_{n}^{\kappa}(X)$ and $g_{\kappa}:\wild_{n}^{\kappa}(Y)\to \wild_{n}^{\kappa}(X)$ be the restriction of $g_{\lambda}$ to $\wild_{n}^{\kappa}(Y)$. Similarly, let $H_{\kappa}$ be the restriction of $H_{\lambda}$ to a map $\wild_{n}^{\kappa}(X)\times I\to \wild_{n}^{\kappa}(X)$ and $G_{\kappa}$ be the restriction of $G_{\lambda}$ to $\wild_{n}^{\kappa}(Y)\times I\to \wild_{n}^{\kappa}(Y)$. Now Lemma \ref{inductionsteplemma} ensures that the homotopies $H_{\kappa}$ and $G_{\kappa}$ give $id_{\wild_{n}^{\kappa}(X)}\simeq g_{\kappa}\circ f_{\kappa}$ and $id_{\wild_{n}^{\kappa}(Y)}\simeq f_{\kappa}\circ g_{\kappa}$ respectively.

If $\kappa$ is a limit ordinal, then $\wild_{n}^{\kappa}(X)=\bigcap_{\lambda<\kappa}\wild_{n}^{\lambda}(X)$ and $\wild_{n}^{\kappa}(Y)=\bigcap_{\lambda<\kappa}\wild_{n}^{\lambda}(Y)$. By our induction hypothesis $f_{\lambda}=f|_{\wild_{n}^{\lambda}(X)}$ and $g_{\lambda}=f|_{\wild_{n}^{\lambda}(Y)}$ for all $\lambda<\kappa$. Therefore, $f_{\kappa}:\wild_{n}^{\kappa}(X)\to \wild_{n}^{\kappa}(Y)$ and $g_{\kappa}:\wild_{n}^{\kappa}(Y)\to \wild_{n}^{\kappa}(X)$ are well-defined. Similarly, $H_{\lambda}=H|_{\wild_{n}^{\lambda}(X)\times \ui}$ for all $\lambda<\kappa$. Therefore, if we take $H_{\kappa}= H|_{\wild_{n}^{\kappa}(X)\times \ui}$, we get a well-defined map $H_{\kappa}$ from \[\wild_{n}^{\kappa}(X)\times \ui=\left(\bigcap_{\lambda<\kappa}\wild_{n}^{\kappa}(X)\right)\times \ui=\bigcap_{\lambda<\kappa}\wild_{n}^{\kappa}(X)\times \ui\]
to $\wild_{n}^{\kappa}(Y)$. It follows that $H_{\kappa}:\wild_{n}^{\kappa}(X)\times \ui\to \wild_{n}^{\kappa}(Y)$ is a homotopy from $id_{\wild_{n}^{\kappa}(X)}$ to $f_{\kappa}\circ g_{\kappa}$. By the same argument, $G_{\kappa}=G|_{\wild_{n}^{\kappa}(Y)\times \ui}$ is a well-defined homotopy from $id_{\wild_{n}^{\kappa}(Y)}$ to $g_{\kappa}\circ f_{\kappa}$.
\end{proof}

\begin{corollary}\label{invariancecorollary}
The transfinite sequence $\{ho(\wild_{n}^{\kappa}(X))\}_{\kappa}$ of homotopy types is a homotopy invariant of $X$.
\end{corollary}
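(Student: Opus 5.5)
The plan is to read the Corollary directly off Theorem \ref{homotopyinvariance2}. I would first make precise what invariance means here: if $X\simeq Y$, then for every ordinal $\kappa$ we have $ho(\wild_{n}^{\kappa}(X))=ho(\wild_{n}^{\kappa}(Y))$. In other words, the two transfinite sequences of homotopy types agree term by term.

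Suppose $X\simeq Y$ and fix homotopy inverses $f:X\to Y$ and $g:Y\to X$. Let $\kappa$ be an arbitrary ordinal. Theorem \ref{homotopyinvariance2} says that the restrictions $f_{\kappa}:\wild_{n}^{\kappa}(X)\to\wild_{n}^{\kappa}(Y)$ and $g_{\kappa}:\wild_{n}^{\kappa}(Y)\to\wild_{n}^{\kappa}(X)$ are well defined and are again homotopy inverses. Hence $\wild_{n}^{\kappa}(X)\simeq\wild_{n}^{\kappa}(Y)$.

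Since $\kappa$ was arbitrary, the sequences $\{ho(\wild_{n}^{\kappa}(X))\}_{\kappa}$ and $\{ho(\wild_{n}^{\kappa}(Y))\}_{\kappa}$ coincide. It is worth noting that both sequences are indexed by the class of all ordinals. There is therefore no need to compare ranks, and indeed the ranks may differ, as the whisker example in the remark on rank non-invariance shows.

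I do not expect a genuine obstacle, because the transfinite induction, including the limit stage, is already carried out in Theorem \ref{homotopyinvariance2}. The only point to check is the degenerate case where some iterated wild set is empty. If $\wild_{n}^{\kappa}(X)=\emptyset$, then $\wild_{n}^{\kappa}(Y)$ admits a map $g_{\kappa}$ into the empty set, so it is also empty, and the two homotopy types agree trivially.
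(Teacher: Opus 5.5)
Your proposal is correct and matches the paper, which gives no separate proof and treats this corollary as an immediate consequence of Theorem \ref{homotopyinvariance2} applied at each ordinal $\kappa$. Your check of the empty case is valid but not needed, since the theorem already provides mutually inverse homotopy equivalences $f_{\kappa}$, $g_{\kappa}$ whether or not the sets are empty.
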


\begin{corollary}
If $n\geq 0$ and $X\simeq Y$, then one of the following hold:
\begin{enumerate}
\item $\{\wild_{n}^{\kappa}(X)\}$ and $\{\wild_{n}^{\kappa}(Y)\}$ both stabilize to $\emptyset$ and $\rkn(X)=\rkn(Y)$,
\item $\{\wild_{n}^{\kappa}(X)\}$ and $\{\wild_{n}^{\kappa}(Y)\}$ both stabilize to homotopy equivalent perfectly $\pi_n$-wild spaces.
\end{enumerate}
\end{corollary}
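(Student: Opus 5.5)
The plan is to reduce everything to Theorem \ref{homotopyinvariance2}, together with the elementary fact that a space is homotopy equivalent to $\emptyset$ only if it is itself empty. Fix homotopy inverses $f:X\to Y$ and $g:Y\to X$. By Theorem \ref{homotopyinvariance2}, for every ordinal $\kappa$ the restrictions $f_\kappa:\wild_{n}^{\kappa}(X)\to \wild_{n}^{\kappa}(Y)$ and $g_\kappa$ are homotopy inverses, so $\wild_{n}^{\kappa}(X)\simeq \wild_{n}^{\kappa}(Y)$. Since a map into $\emptyset$ can only have empty domain, the existence of $f_\kappa$ and $g_\kappa$ forces $\wild_{n}^{\kappa}(X)=\emptyset$ if and only if $\wild_{n}^{\kappa}(Y)=\emptyset$.

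Next I take an ordinal $\kappa^{\ast}\geq \max\{\rkn(X),\rkn(Y)\}$. Then $\wild_{n}^{\kappa^\ast}(X)$ and $\wild_{n}^{\kappa^\ast}(Y)$ are the terminal (stable) members of the two sequences, and they are homotopy equivalent. By the remark following the definition of rank, each stable set is either empty or perfectly $\pi_n$-wild. By the previous paragraph, one of them is empty exactly when the other is. If both are nonempty, we are in case (2).

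If both are empty, I still need $\rkn(X)=\rkn(Y)$. For this I will show that when the sequence stabilizes at $\emptyset$, the rank equals the least ordinal $\kappa$ with $\wild_{n}^{\kappa}(X)=\emptyset$. Suppose $\wild_{n}^{\lambda}(X)=\emptyset$ for some $\lambda$. Since the sequence is descending, $\wild_{n}^{\mu}(X)=\emptyset$ for all $\mu\geq\lambda$, so the sequence is constant from $\lambda$ on and $\rkn(X)\leq\lambda$. Conversely, $\wild_{n}^{\rkn(X)}(X)$ is the stable value, which is $\emptyset$. So $\rkn(X)$ is the least $\kappa$ with $\wild_{n}^{\kappa}(X)=\emptyset$, and the same holds for $Y$. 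Since the emptiness conditions agree for every $\kappa$, the two least ordinals coincide, which gives case (1).

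No step is a serious obstacle. The one point needing care is that Theorem \ref{homotopyinvariance2} is stated without a restriction on $n$, so it applies for all $n\geq 0$, as the statement requires. It rests on Lemma \ref{inductionsteplemma}, whose only hypothesis is $\pi_n$-injectivity of the maps and homotopies involved, and that hypothesis is meaningful for $n=0$ as well.
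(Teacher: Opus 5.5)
Your proposal is correct and follows essentially the same route as the paper. Theorem \ref{homotopyinvariance2} shows that $\wild_{n}^{\kappa}(X)$ and $\wild_{n}^{\kappa}(Y)$ are empty for exactly the same ordinals $\kappa$, and that they remain homotopy equivalent after stabilization. Your explicit identification of $\rkn$ with the least $\kappa$ for which $\wild_{n}^{\kappa}=\emptyset$ simply spells out the step the paper states tersely.
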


\begin{proof}
If $f:X\to Y$ and $g:Y\to X$ are homotopy inverses, then Theorem \ref{homotopyinvariance2} implies that the restricted functions $f_{\kappa}:\wild_{n}^{\kappa}(X)\to \wild_{n}^{\kappa}(Y)$ and $g_{\kappa}:\wild_{n}^{\kappa}(Y)\to \wild_{n}^{\kappa}(X)$ are well-defined functions. Hence, $\wild_{n}^{\kappa}(Y)=\emptyset$ if and only if $\wild_{n}^{\kappa}(X)=\emptyset$. It follows that if either one of the transfinite sequences stabilizes to $\emptyset$ at $\rkn(X)=\kappa_0$, then the other sequence also stabilizes to $\emptyset$ at $\kappa_0$. The other possibility is that $\{\wild_{n}^{\kappa}(X)\}$ and $\{\wild_{n}^{\kappa}(Y)\}$ both stabilize to perfectly $\pi_n$-wild spaces (possibly with different ranks), which remain homotopy equivalent following stabilization by Theorem \ref{homotopyinvariance2}.
\end{proof}

\section{Every countable ordinal is a $\pi_n$-wild rank}

In this section, we give constructions to show that for every countable ordinal $\lambda$, there is an $n$-dimensional Peano continuum with $\rkn(X)=\lambda$. The next lemma is straightforward to prove using Proposition \ref{disjointunionlemma}.

\begin{lemma}\label{disjointunionranklemma}
If $n\geq 0$ and $\{X_{i}\}_{i\in I}$ is a collection of spaces, then $\wildn^{\kappa}(\coprod_{i\in I}X_{i})=\coprod_{i\in I}\wildn^{\kappa}(X_i)$ for all ordinals $\kappa$. Moreover, $\rkn\left(\coprod_{i\in I}X_{i}\right)=\sup\{\rkn(X_{i})\mid i\in I\}$.
\end{lemma}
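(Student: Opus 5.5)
The plan is a transfinite induction on $\kappa$, with Lemma \ref{disjointunionlemma} doing the work at successor stages.

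\emph{Base case.} For $\kappa=0$ both sides equal $\coprod_{i\in I}X_i$, so there is nothing to prove.

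\emph{Successor stage.} Suppose the equality holds for $\kappa$, i.e. $\wildn^{\kappa}(\coprod_i X_i)=\coprod_i\wildn^{\kappa}(X_i)$. Here the right-hand side carries the disjoint union topology, and the first thing to check is that this agrees with the subspace topology inherited from $\coprod_i X_i$. It does, because each $X_i$ is clopen in $\coprod_i X_i$, so each $\wildn^{\kappa}(X_i)$ is clopen in the subspace. Applying Lemma \ref{disjointunionlemma} to the family $\{\wildn^{\kappa}(X_i)\}_{i\in I}$ then gives
\[\wildn^{\kappa+1}\left(\coprod_i X_i\right)=\wildn\left(\coprod_i\wildn^{\kappa}(X_i)\right)=\coprod_i\wildn^{\kappa+1}(X_i).\]

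\emph{Limit stage.} For a limit ordinal $\kappa$, intersections commute with disjoint unions, since points in different summands never interact. So
\[\bigcap_{\lambda<\kappa}\coprod_i\wildn^{\lambda}(X_i)=\coprod_i\bigcap_{\lambda<\kappa}\wildn^{\lambda}(X_i),\]
as subsets. The topologies agree for the same clopen reason as in the successor stage.

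\emph{Rank formula.} Set $\mu=\sup_i\rkn(X_i)$.
\begin{itemize}
\item For every $\kappa\geq\mu$, each summand has stabilized: $\wildn^{\kappa}(X_i)=\wildn^{\rkn(X_i)}(X_i)$. Hence by the first part $\wildn^{\kappa}(\coprod_i X_i)=\wildn^{\mu}(\coprod_i X_i)$, which gives $\rkn(\coprod_i X_i)\leq\mu$.
\item Conversely, let $\nu<\mu$. Then $\nu<\rkn(X_i)$ for some $i$. By minimality of $\rkn(X_i)$, there is some $\kappa\geq\nu$ with $\wildn^{\kappa}(X_i)\neq\wildn^{\nu}(X_i)$. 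Looking at the $i$-th summand, $\wildn^{\kappa}(\coprod_j X_j)\neq\wildn^{\nu}(\coprod_j X_j)$, so the sequence has not stabilized at $\nu$. Hence $\rkn(\coprod_i X_i)\geq\mu$.
\end{itemize}

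The only real subtlety is the topology identification at each stage, and the clopenness of the summands handles it.
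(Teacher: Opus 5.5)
Your proof is correct and follows the route the paper indicates: the paper gives no proof beyond saying the lemma is straightforward from Lemma~\ref{disjointunionlemma}, and your transfinite induction fills that in correctly. It uses that lemma at successor stages, checks that the clopen summands make the subspace topology agree with the disjoint-union topology, commutes the intersections at limit stages, and then compares ranks in both directions directly from the definition of $\rkn$.
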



The construction used to prove Theorem \ref{realizingcompactmetricspacesthm} is the following ``shrinking" modification of adjunction spaces, which we will use heavily in the remainder of the paper.

\begin{definition}[Shrinking Point-Attachment Spaces]\label{shrinkingattachmentdef}
Let $X$ be a compact space, $A=\{a_j\}_{j\in\bbn}$ be a sequence (of not necessarily distinct points) in $X$ and let $\scrb=\{(B_j,b_j)\}_{j\in\bbn}$ be a sequence of based spaces. Let $\san(X,A,\scrb)=X\sqcup \coprod_{j\in\bbn}B_j/\mathord{\sim}$ where $a_j\sim b_j$ for all $j\in\bbn$. That is $\san(X,A,\scrb)$ is obtained by attaching each $B_j$ to $X$ by identifying the basepoint of $B_j$ with $a_j$. We give $\san(X,A,\scrb)$ the following topology: $U\subseteq \san(X,A,\scrv)$ is open if and only if
\begin{enumerate}
\item $X\cap U$ is open in $X$
\item $X\cap B_j$ is open in $B_j$ for all $j\in \bbn$,
\item whenever $x\in U\cap X$ and $j_1<j_2<j_3<\cdots$ is such that $\{a_{j_i}\}_{i\in\bbn}\to x$ in $X$, then $B_{j_i}\subseteq U$ for all but finitely many $i\in\bbn$.
\end{enumerate}
When $\scrb=\{B,B,B,\dots\}$ is constant, we write $\san(X,A,B)$ for the shrinking point-attachment.
\end{definition}

\begin{example}
If $X=\{x_0\}$ contains a single point, then $\scrs(X,A,\scrb)=\sw_{j\in\bbn}B_j$.
\end{example}




\begin{proposition}\label{propertiesprop}\cite[Proposition 5.11]{BrazasMitraHigher}
If $X$ and each $B_j$ is separable (resp. path-connected, path connected and locally path-connected), then so is $\san(X,A,\scrb)$.
\end{proposition}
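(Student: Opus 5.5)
The statement to prove is Proposition \ref{propertiesprop}: separability, path-connectedness, and path-connectedness together with local path-connectedness each pass from $X$ and the $B_j$ to $\san(X,A,\scrb)$. I will treat the three properties in turn, working directly with the three conditions defining open sets in $\san(X,A,\scrb)$. A preliminary observation will be used throughout. Since $X$ is compact, every sequence of indices $j_1<j_2<\cdots$ has a subsequence along which $a_{j_i}$ converges. Consequently, any open set containing $X$ contains all but finitely many of the $B_j$; otherwise infinitely many $B_j$ would not be contained in it, and compactness would produce a limit point in $X$ contradicting condition (3). Along the way I will check that $X$ and each $B_j$ embed as subspaces, which follows from conditions (1) and (2).

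\emph{Separability.} I take a countable dense set $D_X\subseteq X$ and, for each $j$, a countable dense $D_j\subseteq B_j$, and show that $D=D_X\cup\bigcup_j D_j$ is dense. If a nonempty open $U$ meets some $B_j\setminus\{b_j\}$, then $U\cap B_j$ is a nonempty open subset of $B_j$, so it meets $D_j$. Otherwise $U\cap X$ is nonempty and open in $X$, so it meets $D_X$.

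\emph{Path-connectedness.} Every point lies either in $X$ or in some $B_j$. Each $B_j$ is joined inside itself to $b_j=a_j\in X$, and $X$ is path-connected. Since $X$ and the $B_j$ are subspaces, paths in them remain continuous in $\san(X,A,\scrb)$, and concatenating such paths finishes this case.

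\emph{Local path-connectedness.} This is the main obstacle, specifically at points $x\in X$. At points of $B_j\setminus\{b_j\}$ the space is locally homeomorphic to $B_j$, because $B_j\setminus\{b_j\}$ is open by the conditions, so there is nothing to do. For $x\in X$ and an open $U\ni x$, I will construct a path-connected neighborhood inside $U$ as follows.
\begin{enumerate}
\item Choose an open $V\ni x$ in $X$ with $\ov{V}\subseteq U\cap X$. By compactness and condition (3), only finitely many $j$ with $a_j\in\ov{V}$ have $B_j\not\subseteq U$.
\item Shrink $V$ to avoid those finitely many $a_j$ other than $x$ itself, and let $W$ be the path component of $x$ in $V$. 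Because $X$ is locally path-connected, $W$ is open.
\item Form $N=W\cup\bigcup\{B_j : a_j\in W,\ B_j\subseteq U\}\cup\bigcup\{P_j : a_j=x\}$, where $P_j$ is the path component of $b_j$ in an open subset of $B_j$ contained in $U$. These $P_j$ are open because $B_j$ is locally path-connected.
\end{enumerate}
$N$ is path-connected by the argument used in the previous case. For $N$ to be open, condition (3) requires that if $a_{j_i}\to y\in W$, then $B_{j_i}\subseteq N$ for all but finitely many $i$. The delicate part is the $B_j$ with $a_j=x$ that are only partially included. To handle them, I will use the fact that only finitely many $B_j$ with $a_j=x$ fail to lie in $U$, and take $P_j=B_j$ for all the others. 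With that choice, every attached $B_j$ with $a_j\in W$ lies wholly in $N$ except finitely many, so condition (3) holds and $N$ is open.
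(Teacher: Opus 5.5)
Your separability and path-connectedness arguments are correct, and a direct check of the three open-set conditions is the natural route for local path-connectedness. The paper gives no proof of its own here; it only cites \cite{BrazasMitraHigher}.

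\textbf{The gap.} Your preliminary observation and Step 1 both rest on the claim that, since $X$ is compact, $\{a_{j_i}\}$ has a convergent subsequence. Compactness only gives a cluster point, while condition (3) of Definition \ref{shrinkingattachmentdef} constrains only convergent subsequences. What you are really using is sequential compactness. That holds when $X$ is metrizable, which covers every use in the paper, since there $X$ is a continuum. The definition, however, only assumes $X$ compact Hausdorff, and then Step 1 can fail. Take $X=\ui^{\bbr}$, each $B_j=\ui$ based at $0$, and $\{a_j\}$ a sequence of distinct points in a closed neighborhood $\ov V$ of $x$ with no convergent subsequence. Condition (3) then imposes nothing on these $B_j$. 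So $U=X\cup\bigcup_j [0,1/2)_j$, with $[0,1/2)_j\subsetneq B_j$, is open, yet no $B_j$ lies in $U$, and the finiteness claim in Step 1 is false.

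\textbf{The fix.} The proposition still holds in this generality, and compactness is not needed at all. Let $W$ be the path component of $x$ in $U\cap X$, let $P_j$ be the path component of $b_j$ in $U\cap B_j$, and set $N=W\cup\bigcup_{a_j\in W}P_j$.
\begin{itemize}
\item Conditions (1) and (2) for $N$ hold because $X$ and the $B_j$ are locally path-connected: $N\cap X=W$, and $N\cap B_k$ is $P_k$ or $\emptyset$.
\item For condition (3), suppose $a_{j_i}\to y\in W$. Then $a_{j_i}\in W$ eventually, since $W$ is open. Condition (3) for $U$ gives $B_{j_i}\subseteq U$ for all but finitely many $i$. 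For those $i$, $B_{j_i}$ is a path-connected subset of $U\cap B_{j_i}$ containing $b_{j_i}$, so $B_{j_i}\subseteq P_{j_i}\subseteq N$.
\end{itemize}
Clearly $N\subseteq U$ and $N$ is path-connected. This removes Steps 1--2, the choice of $\ov V$, and the special handling of indices with $a_j=x$. It also shows exactly where the hypothesis that each $B_j$ is path-connected is needed.
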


\begin{proposition}\label{peanoprop}\cite[Proposition 5.12]{BrazasMitraHigher}
If $X$ and each $B_j\in\scrb$ is a compact space (respectively, a compact Hausdorff space, continuum, an $n$-dimensional continuum, a Peano continuum, an $n$-dimensional Peano continuum), then so is $\san(X,A,\scrb)$. 
\end{proposition}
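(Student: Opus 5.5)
The plan is to treat the listed properties one at a time. Each stronger property reduces to the weaker ones together with Proposition \ref{propertiesprop}. Two tools are used throughout. The first is the canonical retraction $r:\san(X,A,\scrb)\to X$, which is the identity on $X$ and collapses each $B_j$ to $a_j$. The second is that each $B_j$ and $X$ is a closed subspace of $\san(X,A,\scrb)$.

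To check continuity of $r$, take $V$ open in $X$. Then $r^{-1}(V)=V\cup\bigcup_{a_j\in V}B_j$, and I verify conditions (1)--(3). Conditions (1) and (2) are immediate. For (3), if $x\in V$ and $a_{j_i}\to x$, then eventually $a_{j_i}\in V$, so eventually $B_{j_i}\subseteq r^{-1}(V)$.

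\textbf{Compactness.} Let $\mathscr{U}$ be an open cover. Since $X$ is compact, finitely many members $U_1,\dots,U_m$ cover $X$; put $U=U_1\cup\dots\cup U_m$. I claim that $B_j\subseteq U$ for all but finitely many $j$. Suppose instead that infinitely many $B_j$ fail to lie in $U$. By compactness of $X$, the corresponding $a_j$ accumulate at some $x\in X\subseteq U$. When $X$ is first countable (the metrizable cases of interest), I pass to a subsequence $a_{j_i}\to x$, and condition (3) then gives a contradiction. For a general compact $X$, the same conclusion has to be extracted from condition (3) directly; I expect this to be the delicate point and would handle it by working with the accumulation point and the definition of the topology. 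Each of the finitely many remaining $B_j$ is compact, so it is covered by finitely many members of $\mathscr{U}$, and this yields a finite subcover.

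\textbf{Hausdorff.} I separate points according to where they lie.
\begin{itemize}
\item Two points of $X$: pull back disjoint open sets of $X$ along $r$.
\item A point $y\in B_j\backslash\{b_j\}$ and any other point $z$: since $B_j$ is Hausdorff, choose an open $W\ni y$ in $B_j$ with $b_j\notin\ov{W}$. This $W$ is open in the whole space because it meets neither $X$ nor the other $B_k$. Its complement's interior, or $r^{-1}$ of a neighbourhood of $a_j$ together with $B_j\backslash\ov{W}$, separates $z$ from $y$.
\item Two points in the same $B_j\backslash\{b_j\}$: separate them inside $B_j$.
\end{itemize}

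\textbf{Metrizability, giving the continuum case.} By Urysohn's metrization theorem it suffices to exhibit a countable base. Take countable bases $\mathcal{V}$ of $X$ and $\mathcal{W}_j$ of $B_j$. The candidate base consists of:
\begin{itemize}
\item sets $W\in\mathcal{W}_j$ that avoid $b_j$;
\item sets of the form $r^{-1}(V)\backslash\bigcup_{j\in F}(B_j\backslash W_j)$, where $V\in\mathcal{V}$, $F$ is finite, and each $W_j\in\mathcal{W}_j$ is a basic neighbourhood of $b_j$.
\end{itemize}
This family is countable. That it forms a base at points of $X$ follows from the same finiteness argument as in the compactness step, applied locally: a neighbourhood of $x\in X$ contains all but finitely many $B_j$ with $a_j$ near $x$.

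\textbf{Peano cases.} Path-connectedness and local path-connectedness come from Proposition \ref{propertiesprop}.

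\textbf{Dimension.} The space is a countable union of the closed subsets $X, B_1, B_2,\dots$, each of dimension at most $n$. The countable sum theorem for normal (here metrizable) spaces gives $\dim\le n$. Equality holds because the space contains the $n$-dimensional closed subspace $X$, or any $B_j$.

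The main obstacle is the ``all but finitely many $B_j$ lie in a neighbourhood of $X$'' finiteness argument. It drives both compactness and the countable base, and it is where the exact form of condition (3) matters.
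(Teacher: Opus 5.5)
Your argument is sound for the continuum, $n$-dimensional continuum and the two Peano cases. There $X$ is metrizable, so every sequence in $X$ has a convergent subsequence. Both your finiteness claim (all but finitely many $B_j$ lie in any open $U\supseteq X$) and its local version, used for the countable base, then follow from condition (3) as you indicate. The retraction $r$, the Hausdorff separation, Urysohn's theorem and the countable sum theorem complete those cases.

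The genuine gap is the step you deferred for a general compact $X$, and it cannot be filled. Condition (3) constrains an open set only along subsequences $a_{j_i}$ that actually converge. In a compact space that is not sequentially compact, an accumulation point of $\{a_j\}$ need not be the limit of any subsequence, so ``working with the accumulation point'' gives you nothing to apply (3) to.

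In fact the compact and compact Hausdorff clauses fail as stated. Take $X=\beta\bbn$, $a_j=j$, $B_j=[0,1]$ and $b_j=0$. Since $\beta\bbn$ has no non-trivial convergent sequences and the $a_j$ are distinct, condition (3) is vacuous. Hence $U_0=X\cup\bigcup_{j}[0,1/2)_j$ and $V_j=(1/4,1]_j$ are open in $\san(X,A,\scrb)$. The cover $\{U_0\}\cup\{V_j\}_{j\in\bbn}$ has no finite subcover, because the point $1\in B_j$ lies only in $V_j$. So this Hausdorff space built from compact pieces is not compact.

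For those two clauses you need an extra hypothesis on $X$, such as sequential compactness (for instance, first countability). With it your argument goes through verbatim. Alternatively, replace (3) by its non-sequential form: each $x\in U\cap X$ has a neighbourhood $V$ in $X$ such that $B_j\subseteq U$ for all but finitely many $j$ with $a_j\in V$. Under that form your finite-subcover argument works for any compact $X$, and it agrees with (3) when $X$ is first countable. Nothing is lost for the metrizable cases, which are the only ones used in this paper.
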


Next, we prove the basepoint-free analogue of Theorem \ref{realizingcompactmetricspacesthm}.

\begin{theorem}\label{realizingfreetheorem}
If $X$ is a continuum, then $\fwildn(X)$ is a continuum. Conversely, if $Y$ is a continuum, then there exists a continuum $X$ such that
\begin{enumerate}
\item $Y\subseteq X$ and $X\backslash Y$ is a countable disjoint union of $n$-spheres.
\item $\fwildn(X)=Y$,
\item $\dim(X)= \max\{n,\dim(Y)\}$.
\end{enumerate}
\end{theorem}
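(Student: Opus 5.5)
The first assertion follows from Lemma \ref{comparelemma}. A continuum $X$ is metrizable, hence first countable, so $\fwildn(X)$ is closed in $X$. A closed subspace of a compact metrizable space is again compact metrizable, so $\fwildn(X)$ is a continuum. The paper's convention, where ``continuum'' means compact metrizable space and connectedness is not required, is essential here.

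For the converse, fix a continuum $Y$ and a metric $d$ on it. Choose a countable dense sequence $A=\{a_j\}_{j\in\bbn}$ in $Y$ in which every point of the dense set occurs infinitely often. Let $\scrb=\{(B_j,b_j)\}$ with $B_j=(S^n)^{+}$ and $b_j$ the added isolated point. Set $X=\san(Y,A,\scrb)$. Gluing $b_j$ to $a_j$ simply places a disjoint copy of $S^n$ "near" $a_j$, so $X$ as a set is $Y$ together with a countable disjoint union of $n$-spheres, which is condition (1). To make this concrete, I would embed $Y$ in the Hilbert cube $Q$ and realize the $j$-th sphere as a round $n$-sphere of diameter $<1/j$ in $Q\times\bbr^{n+1}$, placed within distance $1/j$ of $a_j$ and disjoint from $Y$ and from the other spheres. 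The closure of the union is then exactly $Y\cup\bigcup_j S_j$, which is compact metrizable. I would check that this topology agrees with Definition \ref{shrinkingattachmentdef}, or else just use the embedded model directly. Proposition \ref{peanoprop} gives compactness of $\san(Y,A,\scrb)$ in the abstract model. For dimension, $X$ is a countable union of closed sets $Y, S_1,S_2,\dots$, so the countable sum theorem gives $\dim X=\max\{n,\dim Y\}$, which is condition (3).

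For condition (2), the inclusion $Y\subseteq\fwildn(X)$ is immediate. Each $S_j\subseteq X$ is a path component that is a retract, for instance via the map collapsing everything else to a point of $S_j$; this works because $S_j$ is clopen in $X$. So the inclusion $S^n\to S_j\subseteq X$ is essential. Given $y\in Y$, pick $a_{j_k}\to y$ with $j_k\to\infty$, which is possible by density and repetition. Then the spheres $S_{j_k}$ converge to $y$, so $y\in\fwildn(X)$. For the reverse inclusion, a point $x$ of $X\setminus Y$ lies in some $S_j$, which is an open $n$-manifold neighborhood in $X$. Any map $S^n\to X$ with image in a small ball around $x$ inside $S_j$ is null-homotopic, so $x\notin\fwildn(X)$.

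The main obstacle is the topological bookkeeping: verifying that the point-attachment topology with isolated attaching points is compact metrizable, that each $S_j$ is clopen, and that a shrinking sequence of spheres converges exactly to the limits of the $a_j$. The Hilbert-cube embedding settles all three concretely, and I would fall back on it if the abstract adjunction description becomes awkward.
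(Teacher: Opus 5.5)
Your proposal is correct and follows the paper's proof. The first part uses closedness of $\fwildn(X)$ via Lemma \ref{comparelemma}. The converse uses a shrinking point-attachment $\san(Y,A,\scrb)$ of spheres with isolated attaching points along a dense sequence, with local contractibility off $Y$ giving $\fwildn(X)\subseteq Y$. The only difference is minor: the paper attaches a copy of $(\fe_n,a_0)$ at each $a_j$ and then uses closedness of $\fwildn(X)$ to pass from $\im(A)$ to $Y$, whereas you attach single copies of $(S^n)^{+}$ with each $a_j$ repeated infinitely often and show directly that every $y\in Y$ is a limit of attached spheres (the repetition correctly handles isolated points of $Y$).
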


\begin{proof}
When $X$ is a continuum, $\fwildn(X)$ is closed in $X$ (Lemma \ref{comparelemma}). Thus $\fwildn(X)$ is a continuum. Given a continuum $Y$, let $A=\{a_j\}_{j\in\bbn}$ be a sequence with dense image in $Y$ and let $\scrb=\{(\fe_n,a_0),(\fe_n,a_0),(\fe_n,a_0),\dots\}$ be the constant sequence of based space. Set $X=\san(Y,A,\scrb)$. Note that $X\backslash Y$ is a disjoint union of countably many $n$-spheres and the equality $\dim(X)= \max\{n,\dim(Y)\}$ is clear. Since $X$ is locally contractible at each point of $X\backslash Y$, we have $\fwildn(X)\subseteq Y$. For each term $a_j$ there is a fully essential map $f_j:(\fe_n,a_0)\to (X,a_j)$ corresponding to the attached copy of $\fe_n$. Thus $\im(A)\subseteq \fwildn(X)$. Since $\fwildn(X)$ is closed in $X$ by Lemma \ref{comparelemma}, we have $Y=\im(A)\subseteq \fwildn(X)$, completing the proof of (2).
\end{proof}

\begin{lemma}\label{sanlemma}\cite[Lemma 5.13]{BrazasMitraHigher}
Suppose $X$ and each $B_j$ is a Peano continuum. If $b_j\in \wildn(B_j)$ for each $j\in\bbn$, then
\[\wildn(\san(X,A,\scrb))=\wildn(X)\cup \ov{\im(A)}\cup \bigcup_{j\in\bbn}\wildn(B_j)\]
Specifically, if $\im(A)$ is dense in $X$, then $\wildn(\san(X,A,\scrb))=X\cup \bigcup_{j\in\bbn}\wildn(B_j)$.
\end{lemma}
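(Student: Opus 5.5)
We must prove Lemma \ref{sanlemma}: for Peano $X$, $B_j$ with $b_j\in\wildn(B_j)$, the wild set of $Z=\san(X,A,\scrb)$ equals $\wildn(X)\cup\ov{\im(A)}\cup\bigcup_j\wildn(B_j)$. The plan is to prove the two inclusions separately, relying on retractions for the lower bound and local analysis for the upper bound.

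\textbf{Inclusion $\supseteq$.} There are canonical retractions $r:Z\to X$, which collapses each $B_j$ to $a_j$, and $r_j:Z\to B_j$, which collapses everything outside $B_j$ to $b_j$. Continuity of both follows directly from the shrinking topology in Definition \ref{shrinkingattachmentdef}. Since these are retractions, the inclusions $X\to Z$ and $B_j\to Z$ are $\pi_n$-injective. Lemma \ref{inductionsteplemma} then gives $\wildn(X)\subseteq\wildn(Z)$ and $\wildn(B_j)\subseteq\wildn(Z)$. In particular each $a_j=b_j$ lies in $\wildn(Z)$.

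Since $Z$ is a Peano continuum (Proposition \ref{peanoprop}), Lemma \ref{comparelemma} shows that $\wildn(Z)=\fwildn(Z)$ is closed. Hence $\ov{\im(A)}\subseteq\wildn(Z)$.

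\textbf{Inclusion $\subseteq$.} Let $z\in\wildn(Z)$. There are three cases.

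\emph{Case 1: $z\in B_j\setminus\{b_j\}$.} The set $B_j\setminus\{b_j\}$ is open in $Z$, so small maps into $Z$ near $z$ land in $B_j$. Composing with $r_j$ shows that a map essential in $Z$ is essential in $B_j$. Hence $z\in\wildn(B_j)$.

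\emph{Case 2: $z\in X\setminus\ov{\im(A)}$.} Then $z$ has a neighborhood $V$ in $X$ missing $\ov{\im(A)}$. This $V$ is open in $Z$ and contained in $X$, so small essential maps at $z$ lie in $X$. Composing with $r$, they stay essential in $X$, so $z\in\wildn(X)$.

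\emph{Case 3: $z\in\ov{\im(A)}$.} This point is already in the right-hand side, so nothing needs to be shown.

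The cases cover $Z$, since $b_j=a_j\in\im(A)$. The "specifically" clause follows immediately because $\ov{\im(A)}=X$ when $\im(A)$ is dense.

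\textbf{Main obstacle.} The delicate part is verifying the continuity of $r$ and $r_j$ and the openness claims in the shrinking topology. The openness of $B_j\setminus\{b_j\}$ needs care, since condition (3) of Definition \ref{shrinkingattachmentdef} concerns only points of $X$. Openness of $V$ in Case 2 needs the observation that no subsequence of $A$ converges into $V$.
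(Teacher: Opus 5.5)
Your proof is correct. The paper does not reprove this lemma; it is quoted from \cite{BrazasMitraHigher}. Your argument uses the same ingredients the paper uses in the neighbouring Lemmas \ref{swlemma} and \ref{xinsanlemma}. For the lower bound these are the retractions of $\san(X,A,\scrb)$ onto $X$ and onto each $B_j$, combined with Lemma \ref{inductionsteplemma}. For the upper bound they are the openness of $B_j\backslash\{b_j\}$ and of $X\backslash\ov{\im(A)}$ in the shrinking topology.

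The only genuine difference is how you capture the points of $\ov{\im(A)}$. You invoke closedness of $\wildn(Z)=\fwildn(Z)$ via Lemma \ref{comparelemma}. Lemma \ref{xinsanlemma} instead builds, by hand, a shrinking sequence of path-conjugates of the attached wild maps along paths $\alpha_i$ converging to the point. Since the proof of Lemma \ref{comparelemma} is exactly that path-conjugation, the two routes are equivalent, and yours is shorter.

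Two cosmetic remarks:
\begin{itemize}
\item In Cases 1 and 2 the retractions are not what is needed. A map into $B_j$ (or into $X$) that is essential in $Z$ is automatically essential in the subspace, because any null-homotopy in the subspace is already one in $Z$. The retractions are needed only for the reverse direction, in the inclusion $\supseteq$.
\item Lemma \ref{comparelemma} gives $\wildn=\fwildn$ only for $n\geq 1$. This is harmless: for $n=0$ the hypothesis $b_j\in\wild_0(B_j)$ cannot hold for a path-connected $B_j$. Alternatively, use Proposition \ref{zeroequalityprop}.
\end{itemize}
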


In the next lemma, we extend the previous lemma to higher iterations of the $\pi_n$-wild set.

\begin{lemma}\label{swlemma}
If $Y=\san(X,A,\scrb)$ is a shrinking point-attachment with $\scrb=\{(B_j,b_j)\}_{j\in\bbn}$, then for all ordinals $\lambda$, we have
\begin{enumerate}
\item $\ds\bigcup_{j\in\bbn}\wildn^{\lambda}(B_j)\subseteq \wildn^{\lambda}(Y)$
\item and $\wildn^{\lambda}(Y)\backslash X=\coprod_{j\in\bbn}\wildn^{\lambda}(B_j)\backslash\{b_j\}$.
\end{enumerate}
\end{lemma}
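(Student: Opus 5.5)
We argue by transfinite induction on $\lambda$, proving (1) and (2) simultaneously. We use two facts about the shrinking point-attachment topology.

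(a) Each $B_j$ is a retract of $Y$: collapse $X$ and every $B_i$ with $i\neq j$ to $b_j$. This map is continuous by the definition of the topology. Hence the inclusion $B_j\to Y$ is $\pi_n$-injective.

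(b) For each $j$, the set $B_j\backslash\{b_j\}$ is open in $Y$. Its intersection with $X$ is empty, its intersection with $B_j$ is open, and condition (3) of Definition \ref{shrinkingattachmentdef} is vacuous. So $Y\backslash X=\coprod_j B_j\backslash\{b_j\}$ is a topological disjoint union of open subspaces.

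The case $\lambda=0$ is trivial. For limit $\lambda$, both statements pass to intersections. For (1), if $x\in\wildn^{\lambda}(B_j)$, then $x\in \wildn^{\mu}(B_j)\subseteq\wildn^{\mu}(Y)$ for all $\mu<\lambda$. For (2), intersecting the sets $\wildn^{\mu}(Y)\backslash X$ commutes with the disjoint decomposition over $j$.

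For the successor step $\lambda=\mu+1$, assume (1) and (2) hold at $\mu$ and set $W=\wildn^{\mu}(Y)$ and $W_j=\wildn^{\mu}(B_j)$.

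For (1), we have $W_j\subseteq W$ by hypothesis. The key point is that the retraction $r_j:Y\to B_j$ of (a) maps $W$ into $B_j$. If $r_j(W)\subseteq W_j$, then $r_j$ restricts to a retraction $W\to W_j$. This makes the inclusion $W_j\to W$ $\pi_n$-injective, and Lemma \ref{inductionsteplemma} gives $\wildn(W_j)\subseteq \wildn(W)$.

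To see $r_j(W)\subseteq W_j$, we only need $b_j\in W_j$ whenever $W\cap(X\cup\bigcup_{i\neq j}B_i)$ is nonempty, because $r_j$ sends all of that set to $b_j$. I expect this step to be the main obstacle. When $b_j\notin W_j$, I would replace the retraction with a different argument. For $x\in W_j\backslash\{b_j\}$, work in a small open neighborhood $U\subseteq B_j\backslash\{b_j\}$ of $x$, which is open in $Y$ by (b). Wildness is local in the sense that any shrinking essential sequence at $x$ eventually lies in $U$, and $W\cap U=W_j\cap U$ by (2) at stage $\mu$. So it suffices to check that a map $S^n\to W_j\cap U$ which is essential in $W_j$ is essential in $W$. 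For this, compose a null-homotopy in $W$ with the retraction $r_j$. Its image lies in $r_j(W)$, which is contained in $W_j\cup\{b_j\}$ by (2). If $b_j\notin W_j$, the point $b_j$ is isolated from $W_j\backslash\{b_j\}$ by (b), so the homotopy can be redirected to stay in $W_j$. I will need to be careful here and may have to argue path-component-wise.

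For (2), take $x\in W\backslash X$ with $x\in B_j\backslash\{b_j\}$. By (b) and the hypothesis, $W\cap(B_j\backslash\{b_j\})=W_j\backslash\{b_j\}$ is an open subset of $W$. Since $\wildn$ of a space agrees with $\wildn$ of an open subspace at points of that subspace (shrinking sequences eventually enter it, and path-components are respected), the same local comparison as above shows $x\in\wildn(W)$ if and only if $x\in\wildn(W_j)$. This gives (2) at stage $\lambda$.
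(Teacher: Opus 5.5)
Your skeleton matches the paper's: simultaneous transfinite induction, intersections at limit stages, a retraction onto $B_j$ for (1), and openness of $B_j\setminus\{b_j\}$ to reduce the nontrivial half of (2) to (1). The case $b_j\in W_j$ is fine.

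The gap is the case $b_j\notin W_j$. You claim $b_j$ is isolated from $W_j\setminus\{b_j\}$ ``by (b)''. But (b) only says $B_j\setminus\{b_j\}$ is open in $Y$, i.e.\ that $W_j\setminus\{b_j\}$ is open in $W$; it does not make $\{b_j\}$ open in $W_j\cup\{b_j\}$. Iterated wild sets need not be closed (already for non-locally path-connected $B_j$, and for Peano $B_j$ at stages $\mu\geq 2$). So $b_j\in\overline{W_j}\setminus W_j$ can occur, and then $r_j\circ H$ may genuinely pass through $b_j$; there is no evident way to ``redirect'' it. The principle you invoke for (2), that $\wildn$ is local on open subspaces, is also false in general: the cone on $\bbe_n$ minus its vertex is open in the contractible cone but has $\pi_n$-wild points. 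So (1), and with it (2), is unproved whenever $b_j\notin W_j$.

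If $b_j\notin W$, use the paper's Case II. Here $W_j=W\cap B_j=W\cap(B_j\setminus\{b_j\})$ is clopen in $W$, since $B_j$ is closed in $Y$. So null-homotopies in $W$ of maps into $W_j$ stay in $W_j$ by connectedness (equivalently, apply Lemma \ref{disjointunionlemma}); no retraction is needed.

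If $b_j\in W\setminus W_j$, note that the paper's Case I does not really treat this subcase either: its decomposition $\wildn^{\kappa}(Y)=Z\vee\wildn^{\kappa}(B_j)$ presumes $b_j\in\wildn^{\kappa}(B_j)$. One way to close it is as follows.
\begin{itemize}
\item Let $\nu+1\leq\mu$ be the least stage with $b_j\notin\wildn^{\nu+1}(B_j)$; such a stage cannot be a limit. Then $W_j\cup\{b_j\}\subseteq\wildn^{\nu}(B_j)$.
\item Write the null-homotopy as an extension $H\colon D^{n+1}\to W$ of $f$, and suppose $G=r_j\circ H$, which maps into $W_j\cup\{b_j\}$, meets $b_j$.
\item Take $z$ in the frontier of $G^{-1}(b_j)$ and $z_k\to z$ outside it. The paths $G|_{[z,z_k]}$ lie in $\wildn^{\nu}(B_j)$, run from $b_j$ to points $w_k\in W_j\subseteq\wildn(\wildn^{\nu}(B_j))$, and shrink to $b_j$.
\item Conjugate small maps at the $w_k$ that are essential in $\wildn^{\nu}(B_j)$ along these paths. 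Diagonalizing over a countable neighborhood base at $b_j$ (available in the metrizable setting where the lemma is applied) puts $b_j$ in $\wildn^{\nu+1}(B_j)$, a contradiction.
\end{itemize}
Hence $G$ misses $b_j$ and is a null-homotopy of $f$ in $W_j$. This argument handles every case with $b_j\notin W_j$ uniformly.
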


\begin{proof}
We prove (1) and (2) simultaneously by transfinite induction on $\lambda$. Certainly both are true for $\lambda=0$. Suppose the two statements hold for all $\kappa<\lambda$. 

If $\lambda$ is a limit ordinal, then for given $j\in\bbn$, we have 
\[
\wildn^{\lambda}(B_j) = \bigcap_{\kappa<\lambda}\wildn^{\kappa}(B_j)\subseteq \bigcap_{\kappa<\lambda}\wildn^{\kappa}(Y)=\wildn^{\lambda}(Y).\]
Thus $\bigcup_{j\in\bbn}\wildn^{\lambda}(B_j)\subseteq \wildn^{\lambda}(Y)$ holds. This inclusion already implies the direction $\supseteq $ in the second statement of the lemma. If $x\in \wildn^{\lambda}(Y)\backslash X$, then $x\in \wildn^{\kappa}(Y)\backslash X=\coprod_{j\in\bbn}\wildn^{\kappa}(B_j)\backslash\{b_j\}$ for all $\kappa<\lambda$. Find $i$ with $x\in B_{i}\backslash\{b_i\}$. Then $x\in \wildn^{\kappa}(B_i)\backslash\{b_i\}$ for all $\kappa<\lambda$. Thus \[x\in \bigcap_{\kappa<\lambda}\left(\wildn^{\kappa}(B_i)\backslash\{b_i\}\right)= \left(\bigcap_{\kappa<\lambda}\wildn^{\kappa}(B_i)\right)\backslash\{b_i\}=\wildn^{\lambda}(B_i)\backslash\{b_i\}\] proving the inclusion $\subseteq$ of (2).

Suppose $\lambda=\kappa+1$. For the inclusion, fix $j\in\bbn$. We consider two cases:

\textbf{Case I:} Suppose $b_j\in \wildn^{\kappa}(Y)$. Since we have assumed $\wildn^{\kappa}(B_j)\subseteq \wildn^{\kappa}(Y)$ and $\wildn^{\kappa}(Y)\backslash X=\coprod_{j\in\bbn}\wildn^{\kappa}(B_j)\backslash\{b_j\}$, we have $\wildn^{\kappa}(Y)\cap B_j=\wildn^{\kappa}(B_j)\cup\{b_j\}$. Thus we may write $\wildn^{\kappa}(Y)$ uniquely as a one-point union $Z\vee \wildn^{\kappa}(B_j)$ with wedgepoint $b_j$. In particular, $\wildn^{\kappa}(B_j)$ is a retract of $\wildn^{\kappa}(Y)$ and it follows from Lemma \ref{inductionsteplemma} that $\wildn^{\lambda}(B_j)\subseteq \wildn^{\lambda}(Y)$. Thus (1) holds. For (2), the inclusion $\supseteq $ follows from (1). Suppose $x\in \wildn^{\lambda}(Y)\cap B_j\backslash X$. Find a fully essential map $f:(\bbe_n,b_0)\to (\wildn^{\kappa}(Y),x)$. Since $ B_j\backslash\{b_j\}$ is an open neighborhood of $x$ in $Y$, we may disregard finitely many spheres in $\bbe_n$ and assume that $f$ has image in $(B_j\backslash\{b_j\})\cap \wildn^{\kappa}(Y)$. Since $f$ has image in $\wildn^{\kappa}(Y)\backslash X=\coprod_{j\in\bbn}\wildn^{\kappa}(B_j)\backslash\{b_j\}$, $f$ must have image in $\wildn^{\kappa}(B_j)\backslash\{b_j\}$. If the $k$-th restriction of $f$ is not null-homotopic in $\wildn^{\kappa}(Y)$ it is not null-homotopic in the subspace $\wildn^{\kappa}(B_j)$. Since $f:(\bbe_n,b_0)\to (\wildn^{\kappa}(B_j),x)$ is fully essential, we have $x\in \wildn^{\lambda}(B_j)$. Thus $x\in \wildn^{\lambda}(B_j)\backslash \{b_j\}$, proving the inclusion $\subseteq$ for (2).

\textbf{Case II:} Suppose $b_j\notin \wildn^{\kappa}(Y)$. Now, the induction hypothesis $\wildn^{\kappa}(B_j)\subseteq \wildn^{\kappa}(Y)$ implies that $b_j\notin  \wildn^{\kappa}(B_j)$ and \[\wildn^{\kappa}(Y)=\wildn^{\kappa}(B_j)\sqcup\wildn^{\kappa}(Y)\backslash\wildn^{\kappa}(B_j).\]
It follows that \[\wildn^{\lambda}(Y)=\wildn^{\lambda}(B_j)\sqcup\wildn(\wildn^{\kappa}(Y)\backslash\wildn^{\kappa}(B_j)).\]
In particular, $\wildn^{\lambda}(B_j)\subseteq \wildn^{\lambda}(Y)$, proving (1). The inclusion $\supseteq$ for (2) follows again from (1). Let $x\in \wildn^{\lambda}(Y)\backslash X$. Find $i$ with $x\in B_i\backslash\{b_i\}$. Since \[\wildn(\wildn^{\kappa}(Y)\backslash\wildn^{\kappa}(B_i))\subseteq Y\backslash B_i,\] we must have $x\in \wildn^{\lambda}(B_i)=\wildn^{\lambda}(B_i)\backslash\{b_i\}$. This proves the inclusion $\subseteq$ for (2).

In Cases I and II, we have established both (1) and (2) in the statement of the lemma. This completes the induction.
\end{proof}

Note that the equality $\wildn^{\lambda}(Y)\cap B_j=\wildn^{\lambda}(B_j)$ need not be implied by Lemma \ref{swlemma} since we may have $b_j\in \wildn^{\lambda}(Y)$ for reasons unrelated to wildness internal to $B_j$. The next corollary introduces the hypothesis $b_j\in \wildn^{\lambda}(B_j)$ to obtain this conclusion. Since it follows directly from Lemma \ref{swlemma}, we omit the proof.

\begin{corollary}\label{sanwildsetcor}
Consider a shrinking point-attachment $Y=\san(X,A,\scrb)$ for Peano continuum $X$, sequence $A=\{a_j\}_{j\in\bbn}$ of distinct points in $X$, and sequence $\scrb=\{(B_j,b_j)\}_{j\in\bbn}$ of based spaces. Let $\lambda$ be an ordinal. If $b_j\in \wildn^{\lambda}(B_j)$ for all $j\in\bbn$, then $\wildn^{\lambda}(Y)\cap B_j=\wildn^{\lambda}(B_j)$ for all $j\in\bbn$.
\end{corollary}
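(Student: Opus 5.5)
We derive Corollary~\ref{sanwildsetcor} directly from Lemma~\ref{swlemma}, applied at the same ordinal $\lambda$. Fix $j\in\bbn$. Since $B_j$ meets $X$ only in the attaching point $a_j=b_j$, we can split
\[\wildn^{\lambda}(Y)\cap B_j=\left(\wildn^{\lambda}(Y)\cap (B_j\backslash\{b_j\})\right)\cup\left(\wildn^{\lambda}(Y)\cap\{b_j\}\right)\]
and identify each piece separately.

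\textbf{The part off the attaching point.} Note that $B_j\backslash\{b_j\}\subseteq Y\backslash X$. By Lemma~\ref{swlemma}(2),
\[\wildn^{\lambda}(Y)\backslash X=\coprod_{i\in\bbn}\left(\wildn^{\lambda}(B_i)\backslash\{b_i\}\right).\]
The sets $B_i\backslash\{b_i\}$ are pairwise disjoint in $Y$. Intersecting both sides with $B_j\backslash\{b_j\}$ therefore gives
\[\wildn^{\lambda}(Y)\cap(B_j\backslash\{b_j\})=\wildn^{\lambda}(B_j)\backslash\{b_j\}.\]

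\textbf{The attaching point.} By hypothesis $b_j\in\wildn^{\lambda}(B_j)$. Lemma~\ref{swlemma}(1) gives $\wildn^{\lambda}(B_j)\subseteq\wildn^{\lambda}(Y)$, so $b_j\in\wildn^{\lambda}(Y)$. Hence $b_j$ lies in both $\wildn^{\lambda}(Y)\cap B_j$ and $\wildn^{\lambda}(B_j)$.

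\textbf{Conclusion.} Combining the two pieces,
\[\wildn^{\lambda}(Y)\cap B_j=\left(\wildn^{\lambda}(B_j)\backslash\{b_j\}\right)\cup\{b_j\}=\wildn^{\lambda}(B_j),\]
which is the claimed equality. The hypotheses that $X$ is Peano and that the $a_j$ are distinct are not used, since Lemma~\ref{swlemma} is stated for an arbitrary shrinking point-attachment.

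The argument is essentially bookkeeping, and there is no real obstacle. The one point needing care is that the disjoint-union decomposition in Lemma~\ref{swlemma}(2) is compatible with intersecting by a single $B_j\backslash\{b_j\}$. This holds because the attached copies $B_i$ meet each other only through points of $X$, so their complements of the attaching points are disjoint subsets of $Y\backslash X$.
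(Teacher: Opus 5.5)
Your proposal is correct and is exactly the derivation the paper has in mind: the paper omits the proof, saying only that the corollary follows directly from Lemma \ref{swlemma}. You supply that bookkeeping by using part (2) to identify $\wildn^{\lambda}(Y)\cap(B_j\backslash\{b_j\})$, and part (1) together with the hypothesis $b_j\in\wildn^{\lambda}(B_j)$ to handle the attaching point; your remark that the Peano and distinctness hypotheses play no role in this deduction is also accurate.
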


\begin{lemma}\label{xinsanlemma}
Suppose $X$ is a separable, first countable, and locally path connected space. Let $A=\{a_j\}_{j\in\bbn}$ be a sequence with dense image in $X$, $\scrb=\{(B_j,b_j)\}_{j\in\bbn}$ be a sequence of based spaces, and $Y=\san(X,A,\scrb)$. Let $\lambda$ be an ordinal. If $b_j\in \wildn^{\lambda}(B_j)$ for all $j\in\bbn$, then $\wildn^{\lambda}(Y)=X\cup \bigcup_{j\in\bbn}\wildn^{\lambda}(B_j)$.
\end{lemma}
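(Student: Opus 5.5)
We prove the equality $\wildn^{\kappa}(Y)=X\cup \bigcup_{j\in\bbn}\wildn^{\kappa}(B_j)$ for every $\kappa\leq\lambda$ by transfinite induction on $\kappa$. The hypothesis $b_j\in\wildn^{\lambda}(B_j)$ gives $b_j\in \wildn^{\kappa}(B_j)$ for all $\kappa\leq \lambda$, since the iterated wild sets are descending. Hence the statement at stage $\lambda$ is included in the inductive family.

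Two preliminary remarks apply at every stage.

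\begin{enumerate}
\item The inclusion $\bigcup_j\wildn^{\kappa}(B_j)\subseteq \wildn^{\kappa}(Y)$ is Lemma \ref{swlemma}(1).
\item Lemma \ref{swlemma}(2) gives $\wildn^{\kappa}(Y)\backslash X=\coprod_j\wildn^{\kappa}(B_j)\backslash\{b_j\}$. So once we know $X\subseteq \wildn^{\kappa}(Y)$, we get $\wildn^{\kappa}(Y)=X\cup\bigcup_j\wildn^{\kappa}(B_j)$.
\end{enumerate}

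Thus the only real content is the inclusion $X\subseteq \wildn^{\kappa}(Y)$.

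For $\kappa=0$ this is trivial. For limit $\kappa$ it follows from the inductive hypothesis by intersecting. For a successor $\kappa=\mu+1$, set $W=\wildn^{\mu}(Y)=X\cup\bigcup_j\wildn^{\mu}(B_j)$, where $b_j=a_j\in \wildn^{\mu}(B_j)$ for each $j$. We must show that every $x\in X$ lies in $\wildn(W)$.

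\begin{enumerate}
\item First treat the points $a_j$. Each $B'_j:=\wildn^{\mu}(B_j)$ is a retract of $W$: map all of $W\backslash B'_j$ to $b_j$. Continuity follows from the shrinking topology, since $W$ is the one-point union $Z\vee B'_j$ at $b_j$, exactly as in Case I of Lemma \ref{swlemma}. Retractions are $\pi_n$-injective, so Lemma \ref{inductionsteplemma} applies. Because $b_j\in \wildn^{\mu+1}(B_j)=\wildn(B'_j)$, we get $a_j=b_j\in \wildn(W)$.
\item Next treat a general $x\in X$. Density of $\im(A)$ together with first countability of $X$ gives a subsequence $a_{j_i}\to x$. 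By step 1 and Lemma \ref{felemma}, each $a_{j_i}$ has a fully essential map $g_i:(\bbe_n,b_0)\to (B'_{j_i},a_{j_i})$. Its first restriction is essential in $B'_{j_i}$, and hence essential in $W$ because $B'_{j_i}$ is a retract of $W$.
\item By the shrinking topology of $\san(X,A,\scrb)$, the whole of $B_{j_i}$ converges to $x$ as $i\to\infty$. Using local path-connectedness and first countability of $X$, choose paths $\alpha_i$ in $X\subseteq W$ from $x$ to $a_{j_i}$ that converge to $x$.
\item The path-conjugates $\alpha_i\ast (g_i)_1$ form a sequence of based maps converging to $x$. Each is essential in $W$, since path-conjugation is an isomorphism $\pi_n(W,a_{j_i})\to\pi_n(W,x)$. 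Hence $x\in\wildn(W)=\wildn^{\kappa}(Y)$.
\end{enumerate}

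The main obstacle is steps 1--2: checking that $\wildn^{\mu}(B_j)$ really is a retract of $W$ with the relative topology. This means checking that the collapse map is continuous at points of $X$. It holds because any sequence approaching a point of $X$ through the $B_j$'s must eventually lie in $B_j$'s with large index. For a fixed $j$, though, $B_j$ is clopen away from $b_j$, so this needs care.

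An alternative to the retraction is to argue directly that a nullhomotopy in $W$ of a map into $B'_j$ can be pushed into $B'_j$ by the same collapse. It is also worth double-checking that the paths $\alpha_i$ can be taken inside $X$ itself, which uses only the stated local path-connectedness of $X$ and not any property of $Y$.
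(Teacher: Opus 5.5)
Your proposal is correct and follows essentially the same route as the paper: reduce to the inclusion $X\subseteq \wildn^{\kappa}(Y)$, handle limit stages by intersection, and at a successor stage use that each $\wildn^{\mu}(B_j)$ is a retract of $\wildn^{\mu}(Y)=X\cup\bigcup_j\wildn^{\mu}(B_j)$, together with path-conjugates along paths in $X$ that shrink to $x$. The differences are minor gains in care: you reduce via Lemma \ref{swlemma}(2) instead of Corollary \ref{sanwildsetcor}, and you induct explicitly over $\kappa\leq\lambda$; the continuity of the retraction that you flag is just the pasting lemma, since $B_j$ is closed and $B_j\backslash\{b_j\}$ is open in $Y$.
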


\begin{proof}
By Corollary \ref{sanwildsetcor}, we have $\wildn^{\lambda}(Y)\cap B_j=\wildn^{\lambda}(B_j)$ for all $j\in\bbn$.. Therefore, it suffices to show that $X\subseteq \wildn^{\lambda}(Y)$ for all ordinals $\lambda$. We prove this inclusion by induction on $\lambda$. Certainly, it holds for $\lambda=0$. Suppose $X\subseteq \wildn^{\kappa}(Y)$ for all ordinals $\kappa< \lambda$. If $\lambda$ is a limit ordinal, then $X\subseteq \bigcap_{\kappa<\lambda}\wildn^{\kappa}(Y)=\wildn^{\lambda}(Y)$. 

Suppose $\lambda=\kappa+1$. We have assumed $X\subseteq \wildn^{\kappa}(Y)$ and will show that $X\subseteq \wildn^{\lambda}(Y)$. Note that $\im(A)\subseteq \bigcup_{k}\wildn^{\lambda}(B_j)\subseteq \wildn^{\lambda}(Y)$ where the second inequality follows from Part (1) of Lemma \ref{swlemma}. Suppose $x\in X\backslash\im(A)$. Since $\im(A)$ is dense in $X$, find a sequence $j_1<j_2<j_3<\cdots$ of integers such that $\{a_{j_i}\}\to x$ in $X$. For all $i\in\bbn$, we have $b_{j_i}\in \wildn^{\lambda}(B_{j_i})$ and so there exists fully essential maps $g_{i}:(\bbe_n,b_0)\to (\wildn^{\kappa}(B_{j_i}),b_{j_i})$. Since each $\wildn^{\kappa}(B_{j_i})$ contains $b_{j_i}$, $\wildn^{\kappa}(B_{j_i})$ is a retract of $\wildn^{\kappa}(Y)=X\cup \bigcup_{j\in\bbn}\wildn^{\kappa}(B_{j})$. Thus the $p$-th restriction $g_{i,p}$ of $g_i$ is not null-homotopic in $\wildn^{\kappa}(Y)$. Since $X$ is first countable and locally path connected, we may replace $\{j_i\}$ with a cofinal subsequence so that there exists a path $\alpha_i$ from $x$ to $a_{j_i}$ such that $\{\alpha_i\}_{i\in\bbn}$ converges to $x$. For an arbitrary neighborhood $U$ of $x$ in $Y$, we have $\im(\alpha_i)\subseteq U$ for all but finitely many $i$ and the definition of shrinking point-attachments gives $\im(g_i)\subseteq\wildn^{\kappa}(B_{j_i})\subseteq B_{j_i}\subseteq U$ for all but finitely many $i$. Hence, the sequence of path-conjugates $\{\alpha_i\ast g_{j_i,1}\}_{i\in\bbn}$ converges to $x$ in $Y$. Since $\alpha_i$ and $g_i$ have image in $\wildn^{\kappa}(Y)$, the sequence $\{\alpha_i\ast g_{j_i,1}\}_{i\in\bbn}$ also converges to $x$ in $\wildn^{\kappa}(Y)$. Moreover, since $g_{j_i,1}$ is essential in $\wildn^{\kappa}(Y)$, so is $\alpha_i\ast g_{j_i,1}$. Thus $x\in \wildn^{\kappa+1}(Y)=\wildn^{\lambda}(Y)$, completing the proof that $X\subseteq \wildn^{\lambda}(Y)$.
\end{proof} 

\begin{lemma}\label{shrinkingwedgeinductionlemma}
Suppose $\lambda_1<\lambda_2<\lambda_3<\cdots$ is a sequence of countable ordinals and $\lambda=\sup\{\lambda_j\mid j\in\bbn\}$. If, for each $j\in\bbn$, $(B_j,b_j)$ is a based space satisfying $\wildn^{\lambda_{j}}(B_j)=\{b_j\}$, then $\wildn^{\lambda}\left(\sw_{j\in\bbn}B_j\right)=\{x_0\}$ where $x_0$ is the wedgepoint of $\sw_{j\in\bbn}B_j$.
\end{lemma}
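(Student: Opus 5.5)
We regard $Y=\sw_{j\in\bbn}B_j$ as the shrinking point-attachment $\san(\{x_0\},A,\scrb)$ with $A$ the constant sequence at $x_0$; this identification is the Example following Definition \ref{shrinkingattachmentdef}. Lemma \ref{swlemma} then applies with $X=\{x_0\}$. For every ordinal $\kappa$ it gives
\[\wildn^{\kappa}(Y)\backslash\{x_0\}=\coprod_{j\in\bbn}\wildn^{\kappa}(B_j)\backslash\{b_j\},\]
and $\wildn^{\kappa}(B_j)\subseteq \wildn^{\kappa}(Y)$ for all $j$. Since $x_0$ is identified with every $b_j$, it is enough to compute $\wildn^{\lambda}(B_j)\backslash\{b_j\}$ for each $j$ and then to decide whether $x_0\in\wildn^{\lambda}(Y)$.

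First we show $\wildn^{\lambda}(Y)\subseteq\{x_0\}$. Because $\lambda_j<\lambda$, we have $\wildn^{\lambda}(B_j)\subseteq\wildn^{\lambda_j}(B_j)=\{b_j\}$. Hence $\wildn^{\lambda}(B_j)\backslash\{b_j\}=\emptyset$ for every $j$, and the displayed equality at $\kappa=\lambda$ leaves only $x_0$ as a possible point of $\wildn^{\lambda}(Y)$.

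Next we show $x_0\in\wildn^{\lambda}(Y)$. Since $\lambda=\sup_j\lambda_j$ with the $\lambda_j$ strictly increasing, $\lambda$ is a limit ordinal, and $\wildn^{\lambda}(Y)=\bigcap_{\kappa<\lambda}\wildn^{\kappa}(Y)$. It therefore suffices to take a fixed $\kappa<\lambda$ and show $x_0\in\wildn^{\kappa}(Y)$.

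Choose $j$ with $\kappa<\lambda_j$. This is also true of every $j'\ge j$, because the sequence is increasing. For each such $j'$, $b_{j'}\in\wildn^{\lambda_{j'}}(B_{j'})\subseteq\wildn^{\kappa}(B_{j'})\subseteq\wildn^{\kappa}(Y)$ by Lemma \ref{swlemma}(1). So the point $x_0=b_{j'}$ lies in $\wildn^{\kappa}(Y)$. Equivalently, at a successor stage $\kappa=\mu+1$ one has $b_{j'}\in\wildn(\wildn^{\mu}(B_{j'}))$, so there is a fully essential $f:(\bbe_n,b_0)\to(\wildn^{\mu}(B_{j'}),b_{j'})$. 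One needs this $f$ to remain fully essential in $\wildn^{\mu}(Y)$. That holds because $\wildn^{\mu}(B_{j'})$ is a retract of $\wildn^{\mu}(Y)=\bigvee_{i}(\wildn^{\mu}(B_i)\cup\{b_i\})$, exactly as in Case I of Lemma \ref{swlemma}.

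The only delicate point is the bookkeeping in the last step: the argument must go through Lemma \ref{swlemma}(1) at every stage $\kappa<\lambda$, including limit stages, with no additional hypothesis on the $B_j$. This works because the inclusion in Lemma \ref{swlemma}(1) holds for all ordinals. Combining the two steps gives $\wildn^{\lambda}(Y)=\{x_0\}$. Countability of the $\lambda_j$ is not needed anywhere in this argument.
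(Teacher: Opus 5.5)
Your proof is correct and follows the paper's argument. Both regard the shrinking wedge as $\san(\{x_0\},A,\scrb)$, use Lemma \ref{swlemma}(2) together with $\wildn^{\lambda}(B_j)\subseteq\wildn^{\lambda_j}(B_j)=\{b_j\}$ to exclude every point other than $x_0$, and use Lemma \ref{swlemma}(1) to get $x_0\in\wildn^{\kappa}(Y)$ at stages cofinal in the limit ordinal $\lambda$. The differences are cosmetic: the paper intersects over the stages $\lambda_j$ rather than over all $\kappa<\lambda$, and your retract paragraph only re-derives Lemma \ref{swlemma}(1), so it is not needed.
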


\begin{proof}
We will apply Lemma \ref{swlemma} by regarding $\sw_{j\in\bbn}B_j$ as a shrinking point-attachment space $\san(X,A,\scrb)$ with $X=\{x_0\}$ and $\scrb=\{(B_j,b_j)\}_{j\in\bbn}$. 

By Part (1) of Lemma \ref{swlemma}, we have $\bigcup_{j\in\bbn}\wildn^{\kappa}(B_j)\subseteq \wildn^{\kappa}(\sw_{j\in\bbn}B_j)$ for all $\kappa\leq \lambda$. In the case where $\kappa=\lambda_j$, we have $x_0=b_j\in \wildn^{\lambda_{j}}(B_j)\subseteq \wildn^{\lambda_j}(\sw_{j\in\bbn}B_j)$. Thus $x_0\in \bigcap_{j\in\bbn}\wildn^{\lambda_j}(\sw_{j\in\bbn}B_j)=\wildn^{\lambda}(\sw_{j\in\bbn}B_j)$. 

Note that $\wildn^{\lambda_{j+1}}(B_j)=\emptyset$ where $\lambda_{j}+1<\lambda$. By Part (2) of Lemma \ref{swlemma}, we have \[\wildn^{\lambda}\left(\sw_{j\in\bbn}B_j\right)\backslash\{x_0\}=\coprod_{j\in\bbn}\wildn^{\lambda}(B_j)\backslash\{b_j\}\subseteq \coprod_{j\in\bbn}\wildn^{\lambda_j+1}(B_j)\backslash\{b_j\}=\emptyset.\]
Hence, $x_0$ is the only point of $\wildn^{\lambda}(\sw_{j\in\bbn}B_j)$.
\end{proof}

Finally, we prove our main result. Recall the statement of Theorem \ref{thm1} from the introduction.

\begin{proof}[Proof of Theorem \ref{thm1}]
For the case $\lambda=1$, take $X_1=S^1$. For $\lambda\geq 2$, we proceed by induction on $\lambda<\omega_1$. For our base case $\lambda=2$, we take $X_2=\bbe_n$. Suppose the theorem holds (including the secondary statement) for all countable ordinals $\kappa$ with $2\leq \kappa<\lambda$. 

{\bf Case I:} Suppose $\lambda=\kappa+1$ where $\kappa$ is a successor ordinal. By our induction hypothesis (particularly, the secondary statement), we have an $n$-dimensional Peano continuum $(X_{\kappa},x_{\kappa})$ such that $\wildn^{\kappa-1}(X_{\kappa})=\{x_{\kappa}\}$. Let $A=\{a_j\}_{j\in\bbn}$ be a sequence of distinct points in $\bbe_n$ that enumerates a dense set in $\bbe_n$ and let $\scrb=\{(B_j,b_j)\}_{j\in\bbn}$ be the constant sequence of based spaces $(X_{\kappa},x_{\kappa})$ (we identify $b_j$ with $x_{\lambda}$). Let $X_{\lambda}=\san(\bbe_n,A,\scrb)$ be the shrinking point-attachment space where a copy of $X_{\kappa}$ is attached to $\bbe_n$ at each point of $A$ and let $x_{\lambda}$ denote the image of $b_0\in\bbe_n$ in $X_{\lambda}$. By Lemma \ref{peanoprop}, $X_{\lambda}$ is an $n$-dimensional Peano continuum. 

Since $x_{\kappa}\in \wildn^{\kappa-1}(X_{\kappa})$, we have $b_j\in \wildn^{\kappa-1}(B_j)$ for each $j\in\bbn$. Thus $\wild_{n}^{\kappa-1}(X_{\lambda})\cap B_j=\wild_{n}^{\kappa-1}(B_j)=\{b_j\}$ for each $j\in\bbn$. Also, since $A$ is dense in $\bbe_n$, Lemma \ref{xinsanlemma} gives $\bbe_n\subseteq\wild_{n}^{\kappa-1}(\san(\bbe_n,A,\scrb))$. Thus $\wild_{n}^{\kappa-1}(X_{\lambda})=\bbe_n$. Applying the wild set to the equality $\wild_{n}^{\kappa-1}(X_{\lambda})=\bbe_n$ twice, we have $\wild_{n}^{\kappa}(X_{\lambda})=\{x_{\lambda}\}$ and $\wild_{n}^{\lambda}(X_{\lambda})=\emptyset$, which is the desired result.

{\bf Case II:} Suppose $\lambda=\kappa+1$ where $\kappa$ is a limit ordinal. Find successor ordinals $\kappa_1<\kappa_2<\kappa_3<\cdots$ such that $\sup\{\kappa_i\mid i\in\bbn\}=\kappa$. By hypothesis, we can find based $n$-dimensional Peano continuum $(X_{\kappa_i},x_{\kappa_i})$ such that $\wild_{n}^{\kappa_i-1}(X_{\kappa_i})=\{x_{\kappa_i}\}$. Let $X_{\lambda}=\sw_{i\in\bbn}X_{\kappa_i}$ with wedgepoint $x_{\lambda}$. since $\kappa=\sup\{\kappa_i-1\mid i\in\bbn\}$, by Lemma \ref{shrinkingwedgeinductionlemma}, we have $\wildn^{\kappa}(X_{\lambda})=\wildn^{\lambda-1}(X_{\lambda})=\{x_{\lambda}\}$.

 {\bf Case III:} Suppose $\lambda$ is a limit ordinal and $\lambda>\omega$. Find a sequence $2\leq\kappa_1<\kappa_2<\kappa_3<\cdots<\lambda$ of countable successor ordinals with $\sup\{\kappa_i\mid i\in\bbn\}=\lambda$. Using the induction hypothesis, find $n$-dimensional Peano continua $X_1,X_2,X_3,\dots$ such that $\rkn(X_i)=\kappa_i$ and $\wild_{n}^{\kappa_i}(X_{i})=\emptyset$ for each $i\in\bbn$. Pick a point $x_i\in X_{i}$. Attach a 1-cell $e_{i}=[0,1]$ by identifying $1\sim x_i$. Let $Y_i$ denote the resulting space and let $y_i$ be the image of $0$, i.e. the endpoint of the attached arc. Note that $Y_i$ is still an $n$-dimensional Peano continuum and $\wildn(Y_i)=\wildn(X_{i})$. Let $X_{\lambda}=\sw_{i\in\bbn}(Y_i,y_i)$. First, we consider what happens to a single application of $\wildn$ to $X_{\lambda}$. Each $Y_i$ is a retract of $X_i$ and since $\kappa_i\geq 2$, each $Y_i$ is not $n$-connected. Thus $x_0\in \wildn(X_{\lambda})$ and it follows that $\wildn(X_{\lambda})=\{x_0\}\sqcup \bigcup_{i\in\bbn}\wildn(X_{i})$. Note that each $\wildn(X_{i})$ is a union of path components of $\wildn(X_{\lambda})$. Thus $\wildn^2(X_{\lambda})=\coprod_{i\in\bbn}\wildn^2(X_{i})$. Applying \ref{disjointunionranklemma}, it follows that $\wild^{\lambda}(X_{\lambda})=\emptyset$ and $\wild^{\kappa}(X_{\lambda})\neq \emptyset$ whenever $\kappa<\lambda$. Thus $\rkn(X_{\lambda})=\lambda$.
\end{proof}

The proof of our second main result has a similar structure but the details in each case differ from the proof of Theorem \ref{thm1}.

\begin{proof}[Proof of Theorem \ref{thm2}]
The first direction follows from Lemma \ref{countablefreerank}. For the other direction, we proceed by transfinite induction. For $\lambda=0$, we consider any perfectly free $\pi_n$-wild space $X_0$ (recall that a perfectly $\pi_n$-wild Peano continuum is such a space and these are shown to exist in \cite{BrazasMitraHigher}). For $\lambda=1$, we take $X_1=S^n$. For the induction, suppose $\lambda$ is a countable ordinal and that we have constructed a continuum $X_{\kappa}$ with $\frkn(X_{\kappa})=\kappa$ for all $\kappa<\lambda$. Moreover, we assume that $\fwildn^{\kappa-1}(X_{\kappa})=S^n$ (and $\fwildn^{\kappa}(X_{\kappa})=\emptyset$) whenever $\kappa$ is a successor ordinal. In contrast, whenever $\kappa<\lambda$ is a limit ordinal it must necessarily be the case that $\fwildn^{\kappa}(X_{\kappa})$ is perfectly free $\pi_n$-wild since a descending sequence of closed non-empty sets in a compact metric space must have non-empty intersection.

{\bf Case I}: Suppose $\lambda=\kappa+1$ where $\kappa$ is a successor ordinal. By our induction hypothesis, we have continuum $X_{\kappa}$ with $\frkn(X_{\kappa})=\kappa$ and $\fwildn^{\kappa-1}(X_{\kappa})=S^n$. Let $X_{\kappa}^{+}=X_{\kappa}\sqcup\{\ast\}$ be the space $X_{\kappa}$ with an additional isolated basepoint. Let $A=\{a_j\}_{j\in\bbn}$ be a sequence in $S^n$ with dense image and let $\scrb$ be the constant sequence of based spaces $(X_{\kappa}^{+},\ast), (X_{\kappa}^{+},\ast), (X_{\kappa}^{+},\ast), \dots $. Set $X_{\lambda}=\san(S^n,A,\scrb)$. By Proposition \ref{peanoprop}, $X_{\lambda}$ is a continuum. Note that $\fwildn^{\kappa-1}(X_{\lambda})\backslash S^n$ is a disjoint union of $n$-spheres of null-diameters that limit on $S^n$. It follows that $\fwildn^{\kappa}(X_{\lambda})=S^n$. This gives $\frkn(X_{\lambda})=\kappa+1=\lambda$ and $\fwildn^{\lambda-1}(X_{\lambda})=S^n$.

{\bf Case II}: Suppose $\lambda=\kappa+1$ where $\kappa$ is a limit ordinal. Find successor ordinals $\kappa_1<\kappa_2<\kappa_3<\cdots$ such that $\sup(\kappa_j)=\kappa$. Let $X_{\kappa_j}$ be a continuum with $\frkn(X_{\kappa_j})=\kappa_j$ and $\fwildn^{\kappa_j}(X_{\kappa_j})=\emptyset$. Let $X_{\kappa_j}^{+}=X_{\kappa_j}\sqcup \{\ast_j\}$ have an isolated basepoint and define $X_{\lambda}=\sw_{j\in\bbn}X_{\kappa_j}^{+}$ with wedgepoint $x_{\lambda}$. We have $\fwildn^{\kappa_j}(X_{\lambda})=\sw_{k>j}X_{\kappa_k}^{+}$ for all $j\in\bbn$. Then $\fwildn^{\kappa}(X_{\lambda})=\bigcap_{j\in\bbn}\left(\sw_{k>j}X_{\kappa_k}^{+}\right)=\{x_0\}$. Thus $\fwildn^{\lambda}(X_{\lambda})=\emptyset$ and $\frkn(X_{\lambda})=\lambda$.

{\bf Case III}: Suppose $\lambda$ is a limit ordinal. Let $P$ be a perfectly free $\pi_n$-wild continuum. Find ordinals $\kappa_1<\kappa_2<\kappa_3<\cdots<\lambda $ such that $\sup(\kappa_j)=\lambda$. Fix continuum $X_{\kappa_j}$ with $\frkn(X_{\kappa_j})=\kappa_j$ and let $X_{\kappa}^{+}$ be the space with an additional isolated basepoint. Let $W=\sw_{j\in\bbn}(X_{\kappa_j}^{+})$ with wedgepoint $w_0$.

Let $A=\{a_j\}_{j\in\bbn}$ be a sequence with dense image in $P$ and let $\scrb$ be the constant sequence of based spaces $(W,w_0),(W,w_0),(W,w_0),\dots$. Set $X_{\lambda}=\san(P,A,\scrb)$ and note that $X_{\lambda}$ is a continuum by Proposition \ref{peanoprop}. By construction, $X_{\lambda}\backslash P$ is a topological disjoint union of a countably infinite number copies of each space $X_{\kappa_j}$. Hence, $\fwildn^{\kappa}(X_{\lambda})\backslash P\neq \emptyset $ for all $\kappa<\lambda$ and $\fwildn^{\lambda}(X_{\lambda})=P$. Since $\fwild_n(P)=P$, we have $\frkn(X_{\lambda})=\lambda$.

\end{proof}

\begin{example}\label{differenceexample}
With Theorems \ref{thm1} and \ref{thm2} established, we show that it is possible for $\rkn(Z)=\frkn(Z)+\lambda$ to occur where the difference $\lambda$ is an arbitrary infinite countable ordinal.

Let $X$ be an $n$-dimensional Peano continuum with $\rkn(X)=\lambda$. By standard embedding theorems, we may view $K$ as a subspace of $[0,1]^{k}$ for some $k\leq 2n+1$. Let $A=\{a_j\}_{j\in\bbn}$ be a sequence in $[0,1]^k$ where $\{a_{2i-1}\mid i\in\bbn\}$ is a dense subset of $K$ and $\{a_{2i}\mid i\in \bbn\}$ is a countable dense subset of $[0,1]^k$. Let $\scrb$ be the alternating sequence of based spaces $(\bbe_n,b_0),(\fe_n,a_0),(\bbe_n,b_0),(\fe_n,a_0),\dots$ and set $Y=\scrs(X,A,\scrb)$. Then $Y$ is an $n$-dimensional continuum with $\wildn(Y)=K$ and $\fwildn(Y)=[0,1]^k$. By Theorem \ref{realizingcompactmetricspacesthm}, there exists an $n$-dimensional Peano continuum $Z$ such that $\wildn(Z)=Y$ (we also have $\fwildn(Z)=Y$ by Lemma \ref{comparelemma}). 

We have $\rkn(Z)=2+\rkn(X)=2+\lambda=\lambda$ since $\lambda$ is infinite. On the other hand, $\fwildn^2(Z)=[0,1]^k$ and thus $\frkn(Z)=3$. Hence, $\rkn(Z)=\lambda=3+\lambda=\frkn(Z)+\lambda$.
\end{example}

We conclude with an observation regarding Problem \ref{openprob2}. The following proposition suggests that, even in the one-dimensional setting, a (Peano) continuum with uncountable $\pi_1$-wild rank will be fairly complicated. In particular, one should only attempt to construct a perfectly free $\pi_1$-wild example. The authors acknowledge that the existence of such a space may be independent of ZFC.

\begin{proposition}
If there exists a one-dimensional continuum $X$ with $\rk_1(X)\geq \omega_1$, then there exists a perfectly free $\pi_1$-wild subcontinuum $Y\subseteq X$ with $\rk_1(Y)\geq \omega_1$.
\end{proposition}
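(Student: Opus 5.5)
The plan is to take $Y$ to be the point where the \emph{closures} of the based iterated wild sets stabilize. Write $W_\kappa=\wild_1^{\kappa}(X)$ and $C_\kappa=\ov{W_\kappa}$, the closure in $X$. The $C_\kappa$ form a descending transfinite sequence of closed subsets of the second countable space $X$. By the fact quoted in the proof of Theorem \ref{countablefreerank}, it stabilizes at some countable ordinal $\alpha$: $C_\kappa=C$ for all $\kappa\geq\alpha$. I will set $Y=C$. As a closed subset of the continuum $X$, $Y$ is a continuum. It is nonempty because $\rk_1(X)\geq\omega_1>\alpha$, so $W_\alpha\neq\emptyset$ (a countable-stage stabilization at $\emptyset$ would make the rank countable).

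\textbf{Step 1: $Y$ is perfectly free $\pi_1$-wild.} Every subspace of $X$ is at most one-dimensional, so inclusions between subspaces of $X$ are $\pi_1$-injective \cite[Corollary 3.3]{CConedim}. For $\kappa\geq\alpha$, Lemma \ref{inductionsteplemma} applied to $W_\kappa\hookrightarrow C$ gives
\[W_{\kappa+1}=\wild_1(W_\kappa)\subseteq\fwild_1(W_\kappa)\subseteq\fwild_1(C).\]
Since $\fwild_1(C)$ is closed (Lemma \ref{comparelemma}), taking closures yields $C=C_{\kappa+1}\subseteq\fwild_1(C)\subseteq C$. Hence $\fwild_1(Y)=Y$.

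\textbf{Step 2: comparison of based iterations.} I will prove by transfinite induction on $\kappa$ that $W_{\alpha+\kappa}\subseteq\wild_1^{\kappa}(Y)$. The case $\kappa=0$ is $W_\alpha\subseteq C$. The successor step applies $\wild_1$ to the $\pi_1$-injective inclusion $W_{\alpha+\kappa}\hookrightarrow\wild_1^\kappa(Y)$ and uses Lemma \ref{inductionsteplemma}. The limit step follows by intersecting. A consequence is that every $\wild_1^\kappa(Y)$ contains the set $W_{\alpha+\kappa}$, which is dense in $Y$.

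\textbf{Step 3: transferring the rank (main obstacle).} Suppose, for contradiction, that $\rk_1(Y)=\nu<\omega_1$, and put $P=\wild_1^{\nu}(Y)$, a perfectly $\pi_1$-wild set. Step 2 only gives the containments $W_{\alpha+\kappa}\subseteq P$ for all $\kappa\geq\nu$, and this alone yields no contradiction. So the key additional claim I will aim for is a reverse relation:
\[\wild_1^{\kappa}(Y)\cap W_\alpha\subseteq W_{\alpha+\kappa}\quad\text{for all }\kappa.\]
Granting this claim, $W_{\alpha+\kappa}=P\cap W_\alpha$ for every $\kappa\geq\nu$. The $W$-sequence would then stabilize at the countable stage $\alpha+\nu$, contradicting $\rk_1(X)\geq\omega_1$.

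To prove the claim I would again induct on $\kappa$. Take $x\in W_\alpha$ and a fully essential map $f:(\bbe_1,b_0)\to(\wild_1^{\kappa}(Y),x)$. The aim is to deform $f$, through small homotopies inside $Y$, into $W_{\alpha+\kappa}$ while preserving essentiality. This is where one-dimensionality must be used essentially, for example via the reduced-loop and arc-wise structure of one-dimensional spaces, so that essential small loops in $Y$ near points of the dense set $W_\alpha$ are represented by loops lying in $W_\alpha$.

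If this direct deformation fails, the fallback is to modify the choice of $Y$. One would pass to a smaller closed subset, for instance iterating Step 1 on the closures of $\wild_1^\kappa(Y)$ and intersecting over a countable stage, chosen so that the based and closure-based iterations coincide. I expect this step to be the genuine difficulty of the proof.
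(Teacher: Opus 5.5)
Steps 1 and 2 are essentially right (modulo a point about $\alpha$ below). Step 3, however, is left open, so the proposal does not prove $\rk_1(Y)\geq\omega_1$.

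\textbf{The gap in Step 3.} The inclusion $\wild_1^{\kappa}(Y)\cap W_\alpha\subseteq W_{\alpha+\kappa}$ is unproven, and the mechanism you suggest for it cannot be expected to work. In a one-dimensional space a loop has a unique reduced representative whose image lies in the image of every homotopic loop (Cannon--Conner \cite{CConedim}). So a loop of $Y$ can be homotoped into $W_\alpha$ only if its reduced form already lies there.

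The missing idea is simpler, an analogue of the perfect kernel: \emph{if $X$ is one-dimensional and $P\subseteq X$ satisfies $\wild_1(P)=P$, then $P\subseteq\wild_1^{\kappa}(X)$ for every ordinal $\kappa$.} To see this, suppose $P\subseteq W_\kappa$. The inclusion $P\hookrightarrow W_\kappa$ is $\pi_1$-injective, because its composite with $W_\kappa\hookrightarrow X$ is \cite[Corollary 3.3]{CConedim}. Lemma \ref{inductionsteplemma} then gives $P=\wild_1(P)\subseteq\wild_1(W_\kappa)=W_{\kappa+1}$, and limit stages are intersections.

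Apply this to $P=\wild_1^{\nu}(Y)$: it gives $P\subseteq W_{\alpha+\nu}$, while your Step 2 gives $W_{\alpha+\nu}\subseteq P$. Hence $W_{\alpha+\nu}=P=\wild_1(P)$, so $\rk_1(X)\leq\alpha+\nu<\omega_1$, a contradiction. No reverse inclusion is needed.

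\textbf{The choice of $\alpha$.} The fact behind Theorem \ref{countablefreerank} concerns sequences in which each closed set determines the next. Here $C_{\kappa+1}=\ov{\wild_1(W_\kappa)}$ is not a function of $C_\kappa$, and ``$C_\kappa=C$ for all $\kappa\geq\alpha$'' can genuinely fail: if $W_\kappa$ becomes empty at an uncountable stage, so does $C_\kappa$. What does hold is the following. Let $\kappa_U$ be the first index at which a basic open set $U$ misses $C_\kappa$. Then the countable ordinal $\alpha=\sup\{\kappa_U:\kappa_U<\omega_1\}$ makes $C_\kappa$ constant on $[\alpha,\omega_1)$. Step 1 only needs $C_{\alpha+1}=C_\alpha$, so it survives. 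The density remark in Step 2 holds only for countable $\kappa$, but you do not need it.

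\textbf{Comparison with the paper.} The paper avoids this bookkeeping by taking $Y=\fwild_1^{\lambda}(X)$ with $\lambda=\frk_1(X)$. This $Y$ is closed and perfectly free $\pi_1$-wild by definition, and nonempty by Proposition \ref{onedimprop}. For the rank it cites Lemma \ref{initialsumrank}, whose hypothesis $\wild_1^{\lambda}(X)\cong Y$ is not established; only $\wild_1^{\lambda}(X)\subseteq Y$ is. So your diagnosis that containment alone does not transfer the rank applies to the paper's argument as well. The kernel fact above is what completes it there too, with $\lambda$ in place of $\alpha$.
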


\begin{proof}
Since $X$ is a one-dimensional continuum, $\frk_1(X)=\lambda$ is countable by Theorem \ref{countablefreerank}. Let $Y=\fwild_{1}^{\lambda}(X)$. By Proposition \ref{onedimprop}, we have $\wild_{1}^{\kappa}(X)\subseteq \fwild_{1}^{\kappa}(X)$ for all ordinals $\kappa$. Thus $\wild_{1}^{\lambda}(X)\subseteq \fwild_{1}^{\lambda}(X)$. Since $\rk_1(X)$ is not countable, $\wild_{1}^{\lambda}(X)\neq \emptyset$. It follows that $Y$ is a non-empty perfectly free $\pi_1$-wild space. By Lemma \ref{initialsumrank}, we have $\rk_1(X)=\lambda+\rk_1(Y)$. Since $\rk_1(X)$ is uncountable, so is $\rk_1(Y)$.
\end{proof}

\end{document}